\title[Thermodynamic Kuramoto ensemble on a ring lattice]{Emergent behaviors of Thermodynamic Kuramoto ensemble on a regular ring lattice}
\author[Ha]{Seung-Yeal Ha}
\address[Seung-Yeal Ha]{\newline Department of Mathematical Sciences\newline Seoul National University, Seoul 08826, Republic of Korea, and \newline
Korea Institute for Advanced Study, Hoegiro 85, 02455, Seoul, Republic of Korea}
\email{syha@snu.ac.kr}
\author[Park]{Hansol Park}
\address[Hansol Park]{\newline Department of Mathematical Sciences\newline Seoul National University, Seoul 08826, Republic of Korea}
\email{hansol960612@snu.ac.kr}
\author[Ruggeri]{Tommaso Ruggeri}
\address[Tommaso Ruggeri]{\newline Department of Mathematics and
	Alma Mater Research Center
	on Applied Mathematics AM$^2$ \newline
	University of Bologna (Italy)}
\email{tommaso.ruggeri@unibo.it}
\author[Shim]{Woojoo Shim}
\address[Woojoo Shim]{\newline Department of Mathematical Sciences\newline Seoul National University, Seoul 08826, Republic of Korea}
\email{cosmo.shim@gmail.com }
\newtheorem{theorem}{Theorem}[section]
\newtheorem{lemma}{Lemma}[section]
\newtheorem{corollary}{Corollary}[section]
\newtheorem{proposition}{Proposition}[section]
\newtheorem{remark}{Remark}[section]
\newtheorem{definition}{Definition}[section]
\newcommand{\bbr}{\mathbb R}
\newcommand{\bbs}{\mathbb S}
\newcommand{\bx}{\mbox{\boldmath $x$}}
\newcommand{\bv}{\mbox{\boldmath $v$}}
\begin{document}

\date{\today}

\subjclass{82C10 82C22 35B37} \keywords{Emergence, entropy principle, Kuramoto model, thermodynamics}

\thanks{\textbf{Acknowledgment.} The work of S.-Y. Ha was supported by National Research Foundation of Korea(NRF-2017R1A2B2001864), and 
the work of T. Ruggeri was supported National Group of Mathematical Physics GNFM-INdAM}
\begin{abstract}
The temporal evolution of Kuramoto oscillators influenced by the temperature field often appears in biological oscillator ensembles. In this paper, we propose a generalized Kuramoto type lattice model on a regular ring lattice with the equal spacing assuming that each oscillator has an internal energy (temperature). Our lattice model is derived from the thermodynamical Cucker-Smale model for flocking on the 2D free space under the assumption that the ratio between velocity field and temperature field at each lattice point has a uniform magnitude over lattice points. The proposed model satisfies an entropy principle and exhibits emergent dynamics under some sufficient frameworks formulated in terms of initial data and system parameters. Moreover, the phase-field tends to the Kuramoto phase-field asymptotically. 
\end{abstract}

\maketitle \centerline{\date}


\section{Introduction} \label{sec:1}
\setcounter{equation}{0}
Synchronization of phase-coupled oscillators often appears in our biological and chemical complex oscillatory systems \cite{A-B, B-B, D-B1, Pe, P-R, Wi1}. In spite of its ubiquity in our nature, a rigorous mathematical study based on mathematical models has been initiated by two pioneers Arthur Winfree \cite{Wi2} and Yoshiki Kuramoto \cite{Ku2} about a half-century ago. In this paper, we present a generalized Kuramoto lattice model on a ring lattice for the phase evolution of Kuramoto oscillators assuming that each oscillator has its own internal temperature i.e.,  a coupled phase-temperature model on the ring lattice. For the mathematical modeling for such a situation, we consider a finite size ensemble of Kuramoto oscillators (or rotators) on the unit circle under the influence of a temperature field. Interactions between Kuramoto oscillators and temperature fields were already discussed in biology literature, e.g., \cite{N-G-R-P, R-L-D, Z-V} without any explicit mathematical models. We can reasonably argue that the synchronizability of Kuramoto oscillators will be inversely proportional to the size of temperature, i.e., as the temperature becomes higher, the synchronizability of oscillator ensemble will be lowered. However, as far as authors know, this plausible interaction scenario between the phase-field and temperature field has not been addressed in mathematics and physics literature via oscillator models and their rigorous analysis. In this paper, we are mainly interested in the mathematical modeling and the rigorous justification of our plausible interaction scenario. To motivate our discussion, we begin with the Kuramoto model whose emergent dynamics is extensively studied in literature, to name a few, \cite{A-R, B-C-M, C-H-J-K, C-S, Da, D-X, D-B, H-K-L, H-K-L1, H-K-R, H-L-X, J-M-B, L-H, M-S3, M-S2, M-S1,S-K, S-W, St, V-M1, V-M2}. 

Consider a ring lattice on $\bbs^1$ consisting of $N$ lattice points with heterogeneous coupling weights $\{\psi_{\alpha \beta} \}$. Let
 $\theta_\alpha = \theta_\alpha(t)$ and $\nu_\alpha$ be the Kuramoto phase and natural frequency evaluated at the $\alpha$-th lattice point $x_\alpha \in \bbs^1$, respectively. Then, the temporal evolution of $\theta_\alpha$ is given by the following first-order system:
\begin{equation} \label{KM}
{\dot \theta}_\alpha = \nu_\alpha + \frac{\kappa}{N} \sum_{\beta=1}^{N} \psi_{\alpha \beta} \sin(\theta_\beta - \theta_\alpha), \quad \alpha = 1, \cdots, N,
\end{equation}
where $\kappa$ is a nonnegative coupling strength, and positive constant $\psi_{\alpha \beta}$ takes into account the coupling strength between oscillators which may depend on the mutual distance between lattice points. For simplicity, we assume that the lattice points are fixed and therefore weights are constants (see \cite{H-L-X,H-J-K} for example). Note that the Kuramoto model \eqref{KM} is a kind of {\it mechanical model} that does not take into account of temperature field. Then, a natural question is 
\begin{quote}
``How to model the phase evolution of a Kuramoto ensemble in a self-consistent temperature field?"
\end{quote}
i.e., how to couple a temperature field to \eqref{KM} so that the resulting model exhibits synchronization under certain conditions, and at the same time, it should be consistent with the entropy principle. In order to introduce a reasonable thermodynamic Kuramoto (TK) model, we use Kuramoto's heuristic argument which was used for the derivation of the Kuramoto model \eqref{KM} from the Landau-Stuart model, and relation between the Cucker-Smale model and Kuramoto model \cite{H-J-K}. More precisely, we apply the above outlined argument to the thermodynamic Cucker-Smale(TCS) model to derive the thermodynamic Kuramoto model on a regular ring lattice with the same spacing $\{x_\alpha \} \subset \bbs^1$. Let $T_\alpha = T_\alpha(t)$ be the scalar-valued temperature at the lattice point $x_\alpha$ and at time $t$. Then, we assume that the dynamics of $\theta_\alpha$ and $T_\alpha$ are governed by the following Cauchy problem to the thermodynamic Kuramoto(TK) model:
\begin{equation} \label{TKM}
\begin{cases}
\displaystyle \dot{\theta}_\alpha = \nu_\alpha+\displaystyle\frac{\kappa_1}{N}\sum_{\beta=1}^N \frac{\psi_{\alpha\beta}}{T_\alpha} \sin(\theta_\beta-\theta_\alpha),~~t > 0, \\
\displaystyle \dot{T}_\alpha = \frac{\kappa_2}{N}\sum_{\beta=1}^N \frac{\zeta_{\alpha\beta}T_*^2}{(T_*^2+\eta^2T_\alpha)}\left(\frac{1}{T_\alpha}-\frac{1}{T_\beta}\right), \\
\displaystyle (\theta_\alpha, T_\alpha) \Big|_{t = 0} = (\theta_\alpha^{in}, T_\alpha^{in}), \quad \alpha = 1, \cdots, N.
\end{cases}
\end{equation}

Note that for the isothermal case with the same temperatures $T_\alpha = T_0$ for all $\alpha$, the TK model reduces to the Kuramoto model \eqref{KM} with $\kappa = \frac{\kappa_1}{T_0}$. In fact, we will show that the temperature field will approach to the constant value asymptotically so that the Kuramoto model will govern the phase dynamics for \eqref{TKM} asymptotically. As long as the initial temperatures are strictly positive, the temperature field takes positive values (see Lemma \ref{L2.1}). Hence, local well-posedness of the TK model will follow from the standard Cauchy-Lipschitz theory as long as the initial temperatures are strictly positive. Hence, the interesting question for \eqref{TKM} lies on the large-time behavior of \eqref{TKM}.  \newline

The main results of this paper are three-fold. First, we provide a heuristic derivation of the TK model from the TCS model by adapting Kuramoto's formal arguments (see Section \ref{sec:2.2}). Second, we provide several sufficient frameworks leading to the emergent dynamics of the TK model. For a homogeneous ensemble with the same natural frequency confined in a half circle, we show that the complete phase synchronization emerges exponentially fast, as long as the initial temperatures and coupling strengths are strictly positive, we show that there exists a constant asymptotic state $(\theta^{\infty}, T^{\infty})$ such that 
\[  \lim_{t \to \infty} \max_{1 \leq \alpha \leq N} \Big ( |\theta_\alpha(t) - \theta^{\infty} | + |T_\alpha(t) - T^{\infty} | \Big) = 0,\]
(see Theorem \ref{T3.0} and Theorem \ref{T3.1} for details). On the other hand, for a heterogeneous ensemble with distributed natural frequencies, we show that for some specific class of initial phase configuration, phase-diameter is uniformly bounded and asymptotically controlled by the quantity of the order of $ \kappa_1^{-1}$ and $T$: for the temperature limit $T^\infty$ and natural frequency diameter $D(\nu):=\max_{\alpha, \beta}|\nu_\alpha-\nu_\beta|$, we have
\[ \limsup_{t \to \infty} {\mathcal D}(\Theta(t)) \leq \arcsin\left(\frac{D(\nu)T^\infty}{\kappa_1\psi_{\min}}\right),\quad \psi_{\min}:=\min_{1 \leq \alpha, \beta \leq N}\psi_{\alpha \beta}>0, \]
(see Proposition \ref{P3.2}). Moreover, we also provide a sufficient framework leading to the convergence of phase-locking: there exists asymptotic states $\Theta^{\infty} = (\theta_1^{\infty}, \cdots, \theta_N^{\infty})$ and $T^\infty$ such that 
\[  \lim_{t \to \infty} \max_{1 \leq \alpha \leq N} \Big ( |\theta_\alpha(t) - \theta_\alpha^{\infty} | + |T_\alpha(t) - T^{\infty} | \Big) = 0. \]
Third, we show that the asymptotic dynamics of the TK model can be approximately governed by the Kuramoto model (see Theorem \ref{T4.1}).  \newline

The rest of this paper is organized as follows. In Section \ref{sec:2}, we briefly discuss the thermodynamic Cucker-Smale model and its basic properties, and then provide a heuristic derivation of the thermodynamic Kuramoto model with a priori estimates on temperature. In Section \ref{sec:3}, we provide several emergent dynamics of the TK model for homogeneous and heterogeneous ensembles. In Section \ref{sec:4}, we study the asymptotic equivalence between the TK model and the Kuramoto model. Finally, Section \ref{sec:5} is devoted to a brief summary of our main results. \newline

\noindent {\bf Notation}: The constant $C$ denotes a generic positive constant which may differ from line to line, and the relation $A(t)  \lesssim B(t)$ represents an inequality
$A(t) \leq C B(t)$ for a positive constant $C$ and all $t\geq 0$. For notational simplicity, we also use handy notation from time to time:
\[ \max_{\alpha}  := \max_{1 \leq \alpha \leq N}, \quad  \max_{\alpha, \beta}  := \max_{1 \leq \alpha, \beta \leq N}, \quad \sum_{\alpha} :=  \sum_{\alpha =1}^{N}, \quad \sum_{\alpha, \beta} :=  \sum_{\alpha = 1}^{N} \sum_{\beta= 1}^{N}. \]
Also, for $N$-dimensional vector $X=(x_1,\cdots,x_N)$, we denote by $\mathcal{D}(X)$ as a maximal difference between $x_\alpha'$s:
\[\mathcal{D}(X):=\max_{\alpha, \beta}|x_\alpha-x_\beta|. \]

\section{Preliminaries} \label{sec:2}
\setcounter{equation}{0}
In this section, we first present the thermodynamic Cucker-Smale(TCS) model for the flocking dynamics of a Cucker-Smale ensemble in a self-consistent temperature field, and study its basic properties such as Galilean invariance, conservation laws, and entropy principle. Then, we derive a thermodynamic Kuramoto(TK) model by applying Kuramoto's heuristic argument to the TCS model. Our derived phase-temperature coupled model will inherit entropy principle from the TCS model, but lose conservation law for the total phase in the derivation procedure.
\subsection{The TCS model} \label{sec:2.1} 
Ha and Ruggeri \cite{H-R}  observed an analogy between the extended theory of mixture of gases with multi-temperatures and the
CS flocking model. Under the spatial homogeneity, a hyperbolic PDE model for a mixture of gases \cite{R-Sim,R-S} reduces to a particle model which includes the CS model as a special case. For this model, the authors coined the name of thermomechanical Cucker-Smale (TCS) model for a generalized CS model with temperature as an internal variable. For the isothermal case, this model reduces to the CS flocking model.

Consider an ensemble of Cucker-Smale flocking particles in a self-consistent temperature field. Let $\bx_\alpha, ~\bv_\alpha$ and $T_\alpha$  be the position, velocity and temperature of the $\alpha$-th Cucker-Smale particle, respectively. Then, the dynamics of TCS particle is given by the following first-order system for $(\bx_\alpha, \bv_\alpha, T_\alpha)$:
\begin{equation}\label{TCS}
\begin{cases}
\displaystyle \frac{d \bx_\alpha}{dt} =\bv_\alpha, \quad t > 0,~~\alpha = 1, \cdots, N, \\
\displaystyle \frac{d \bv_\alpha}{dt} = \frac{\kappa_1}{N}\sum_{\beta}\psi_{\alpha\beta}\left(\frac{\bv_\beta-\bv}{T_\beta}-\frac{\bv_\alpha - \bv}{T_{\alpha}}\right),\\
\displaystyle \frac{d}{dt} \Big(T_\alpha + \frac{1}{2} |\bv_\alpha|^2 \Big) = \frac{\kappa_2}{N}\sum_{\beta}\zeta_{\alpha\beta}\left(\frac{1}{T_\alpha}-\frac{1}{T_\beta}\right) \\
\displaystyle \hspace{3.5cm} + \frac{\kappa_1}{N}\sum_{\beta}\psi_{\alpha\beta}\left(\frac{\bv_\beta-\bv}{T_\beta}-\frac{\bv_\alpha - \bv}{T_{\alpha}}\right) \cdot \bv,
\end{cases}
\end{equation}
where $\bv:= \frac{1}{N} \sum_{\beta} \bv_\beta$ is the average velocity, and $\kappa_1, \kappa_2$ are positive coupling strengths.  The second equation \eqref{TCS}$_2$ is the momentum balance  equation  of each component in  the case of  a mixture of gas, while the third equation  \eqref{TCS}$_3$ is the balance of energy of each species. For simplicity, we take the common specific heat and unit density, and we also assumed that internal energy of each species is  linear in the temperature.  Therefore, individual energy density consisting of internal energy and kinetic energy becomes $T_\alpha + \frac{1}{2} |\bv_\alpha|^2$. For the mechanical case with common constant temperature, system \eqref{TCS} reduces to the first two equations and coincides with the classical Cucker-Smale model. We observe that the case in which the matrices $(\psi_{\alpha \beta})$ and $(\zeta_{\alpha \beta})$ depending of mutual distance of particles was also considered in \cite{H-K-Rug} and the relativistic model of TCS was recently proposed by Ha-Kim-Ruggeri  \cite{H-K-Rug2}.
\bigskip

Next, we recall that the TCS model satisfies Galilean invariance and global conservation laws.
\begin{proposition} \label{P2.1}
Let $\{ (\bx_\alpha, \bv_\alpha, T_\alpha) \}$ be a solution to \eqref{TCS}. Then, the following two assertions hold.
\begin{enumerate}
\item
(Galilean invariance): For any constant vector ${\mathbf c} \in \bbr^d$, system \eqref{TCS} is invariant under Galilean transformation:
\[ (\bx_\alpha, \bv_\alpha, T_\alpha) \quad \Longrightarrow \quad (\bx_\alpha + t {\mathbf c}, \bv_\alpha + {\mathbf c}, T_\alpha). \]
\item
(Global conservation laws): The total momentum and energy are conserved along the flow \eqref{TCS}:
\[ \frac{d}{dt} \sum_{\alpha} \bv_\alpha = 0 \quad \mbox{and} \quad \frac{d}{dt} \sum_{\alpha} \Big(T_\alpha + \frac{1}{2} |\bv_\alpha|^2 \Big) =0, \quad t > 0.\]
\end{enumerate}
\end{proposition}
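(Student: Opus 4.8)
The plan is to prove both assertions by direct substitution and an elementary symmetry argument, with no analytic machinery required.

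For the Galilean invariance in (1), I would introduce the shifted triple $(\widetilde{\bx}_\alpha,\widetilde{\bv}_\alpha,\widetilde{T}_\alpha):=(\bx_\alpha+t{\mathbf c},\bv_\alpha+{\mathbf c},T_\alpha)$ and check that it again solves \eqref{TCS}. The position equation is immediate since $\dot{\widetilde{\bx}}_\alpha=\dot{\bx}_\alpha+{\mathbf c}=\widetilde{\bv}_\alpha$. For the velocity equation \eqref{TCS}$_2$, the point is that its right-hand side sees the velocities only through the fluctuations $\bv_\gamma-\bv$, and since the barycenter transforms as $\widetilde{\bv}=\bv+{\mathbf c}$ one has $\widetilde{\bv}_\gamma-\widetilde{\bv}=\bv_\gamma-\bv$ for every $\gamma$; together with $\dot{\widetilde{\bv}}_\alpha=\dot{\bv}_\alpha$ this gives \eqref{TCS}$_2$ for the tilded variables. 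The energy equation \eqref{TCS}$_3$ is the only line that calls for a short computation: expanding $\frac12|\widetilde{\bv}_\alpha|^2=\frac12|\bv_\alpha|^2+{\mathbf c}\cdot\bv_\alpha+\frac12|{\mathbf c}|^2$ adds a term ${\mathbf c}\cdot\dot{\bv}_\alpha$ on the left-hand side, while replacing $\bv$ by $\widetilde{\bv}=\bv+{\mathbf c}$ in the kinetic-energy production term on the right adds ${\mathbf c}\cdot\frac{\kappa_1}{N}\sum_\beta\psi_{\alpha\beta}\left(\frac{\bv_\beta-\bv}{T_\beta}-\frac{\bv_\alpha-\bv}{T_\alpha}\right)$; these two additions coincide by \eqref{TCS}$_2$, so \eqref{TCS}$_3$ is preserved and (1) follows.

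For the conservation laws in (2), I would sum \eqref{TCS}$_2$ over $\alpha$. Abbreviating $a_\gamma:=\frac{\bv_\gamma-\bv}{T_\gamma}$, the total force equals $\frac{\kappa_1}{N}\sum_{\alpha,\beta}\psi_{\alpha\beta}(a_\beta-a_\alpha)$, which vanishes upon relabeling $\alpha\leftrightarrow\beta$ and using the symmetry $\psi_{\alpha\beta}=\psi_{\beta\alpha}$; hence $\frac{d}{dt}\sum_\alpha\bv_\alpha=0$. Summing \eqref{TCS}$_3$ over $\alpha$, the $\zeta$-term $\frac{\kappa_2}{N}\sum_{\alpha,\beta}\zeta_{\alpha\beta}\left(\frac1{T_\alpha}-\frac1{T_\beta}\right)$ vanishes by the same symmetry/antisymmetry pairing, and the remaining term is $\left(\frac{\kappa_1}{N}\sum_{\alpha,\beta}\psi_{\alpha\beta}(a_\beta-a_\alpha)\right)\cdot\bv$, which is zero by the momentum identity since $\bv$ does not depend on the summation indices; this yields $\frac{d}{dt}\sum_\alpha\left(T_\alpha+\frac12|\bv_\alpha|^2\right)=0$.

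The whole argument is bookkeeping, and I do not expect a genuine obstacle; the one place to be careful is the energy identity under the Galilean shift, where the cross term produced by $|\widetilde{\bv}_\alpha|^2$ must be seen to cancel exactly against the shift of $\bv$ in the kinetic-energy production term, the cancellation being nothing but the velocity equation itself. The only structural inputs are the symmetry of the communication matrices $(\psi_{\alpha\beta})$, $(\zeta_{\alpha\beta})$ and the strict positivity of the temperatures, which keeps every quotient well defined along the flow.
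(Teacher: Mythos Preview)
Your proposal is correct and follows essentially the same approach as the paper: the Galilean invariance hinges on the observation that the fluctuations $\bv_\gamma-\bv$ are unchanged under the shift and that the extra ${\mathbf c}\cdot\dot{\bv}_\alpha$ term in the energy equation is absorbed by the velocity equation, while the conservation laws follow from the antisymmetry of the right-hand sides under the index swap $\alpha\leftrightarrow\beta$. Your write-up is in fact somewhat more detailed than the paper's, which merely records the two key identities and appeals to antisymmetry without writing out the sums.
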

\begin{proof} (i) The Galilean invariance can be easily seen from the following observations:
\[ (\bv_\alpha + {\mathbf c}) - \frac{1}{N} \sum_{\alpha} ((\bv_\alpha + {\mathbf c}) ) = \bv_\alpha - \frac{1}{N} \sum_{\alpha} \bv_\alpha = \bv_\alpha  - \bv,\]
and 
\[  \frac{d}{dt} \Big(T_\alpha + \frac{1}{2} |\bv_\alpha + {\mathbf c}|^2 \Big) =   \frac{d}{dt} \Big(T_\alpha + \frac{1}{2} |\bv_\alpha|^2 \Big)   + {\mathbf c} \cdot \frac{d \bv_\alpha}{dt}.   \]

\vspace{0.2cm}

\noindent (ii) The conservation of momentum and energy follow from the anti-symmetry of the R.H.S. in $\eqref{TCS}_2$  and $\eqref{TCS}_3$ by the index transformation $(\alpha, \beta)~\longleftrightarrow~(\beta, \alpha)$.  
\end{proof}
Since the total momentum is conserved, ${\bf v}$ is constant  and from Galilean invariance, without loss of generality, we can take the rest frame for which
\[  \bv = 0.  \]
In this  frame, the TCS model takes a simpler form:
\begin{equation}\label{TCS-1}
\begin{cases}
\displaystyle \frac{d \bx_\alpha}{dt} =\bv_\alpha,  \quad t > 0,~~\alpha = 1, \cdots, N, \\
\displaystyle \frac{d \bv_\alpha}{dt} = \frac{\kappa_1}{N}\sum_{\beta}\psi_{\alpha\beta}\left(\frac{\bv_\beta}{T_\beta}-\frac{\bv_\alpha}{T_{\alpha}}\right),\\
\displaystyle \frac{d}{dt} \Big(T_\alpha + \frac{1}{2} |\bv_\alpha|^2 \Big) = \frac{\kappa_2}{N}\sum_{\beta}\zeta_{\alpha\beta}\left(\frac{1}{T_\alpha}-\frac{1}{T_\beta}\right),
\end{cases}
\end{equation}
and now we introduce an entropy functional ${\mathcal S}$ for \eqref{TCS-1}:
\begin{equation} \label{Entropy}
\mathcal{S}(t) := \sum_{\alpha} \ln T_\alpha(t).
\end{equation}
\begin{proposition} \label{P2.2}
Let $\{ (\bx_\alpha, \bv_\alpha, T_\alpha) \}$ be a solution to \eqref{TCS}. Then, the total entropy is non-decreasing:
\[ \frac{d{\mathcal S}}{dt} \geq 0, \quad t > 0. \]
\end{proposition}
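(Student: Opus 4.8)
The plan is to differentiate $\mathcal{S}$ along the flow and rewrite $\dot{\mathcal{S}}$ as a sum of manifestly nonnegative terms, relying on the symmetry of the communication weights $(\psi_{\alpha\beta})$ and $(\zeta_{\alpha\beta})$. Since $\mathcal{S}$ depends only on the temperatures, and these are left unchanged by the Galilean transformation of Proposition \ref{P2.1}, it suffices to establish the inequality in the rest frame $\bv = 0$, i.e. for the reduced system \eqref{TCS-1}. (We use throughout that $T_\alpha(t) > 0$ for all $t > 0$ once the initial temperatures are positive, so that $\mathcal{S}$ and all the quotients below make sense.)

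First I would isolate the evolution of $T_\alpha$ alone: writing $\frac{d}{dt}\bigl(T_\alpha + \tfrac12 |\bv_\alpha|^2\bigr) = \dot T_\alpha + \bv_\alpha \cdot \dot \bv_\alpha$ and inserting $\dot \bv_\alpha$ from \eqref{TCS-1}$_2$ into \eqref{TCS-1}$_3$, one obtains
\[
\dot T_\alpha = \frac{\kappa_2}{N}\sum_{\beta} \zeta_{\alpha\beta}\Bigl(\frac{1}{T_\alpha} - \frac{1}{T_\beta}\Bigr) - \frac{\kappa_1}{N}\sum_{\beta}\psi_{\alpha\beta}\Bigl(\frac{\bv_\beta}{T_\beta} - \frac{\bv_\alpha}{T_\alpha}\Bigr)\cdot \bv_\alpha .
\]
Hence $\dot{\mathcal{S}} = \sum_\alpha \dot T_\alpha / T_\alpha$ splits into a temperature contribution $\frac{\kappa_2}{N}\sum_{\alpha,\beta}\frac{\zeta_{\alpha\beta}}{T_\alpha}\bigl(\frac{1}{T_\alpha} - \frac{1}{T_\beta}\bigr)$ and a kinetic contribution $-\frac{\kappa_1}{N}\sum_{\alpha,\beta}\psi_{\alpha\beta}\bigl(\frac{\bv_\beta}{T_\beta} - \frac{\bv_\alpha}{T_\alpha}\bigr)\cdot\frac{\bv_\alpha}{T_\alpha}$.

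The heart of the argument is the relabelling $(\alpha,\beta) \leftrightarrow (\beta,\alpha)$ applied to each contribution. Averaging the temperature sum with its relabelled copy and using $\zeta_{\alpha\beta} = \zeta_{\beta\alpha}$ turns it into $\frac{\kappa_2}{2N}\sum_{\alpha,\beta}\zeta_{\alpha\beta}\bigl(\frac{1}{T_\alpha} - \frac{1}{T_\beta}\bigr)^2$; the same symmetrization of the kinetic sum, using $\psi_{\alpha\beta} = \psi_{\beta\alpha}$, pairs $\bigl(\frac{\bv_\beta}{T_\beta} - \frac{\bv_\alpha}{T_\alpha}\bigr)\cdot\frac{\bv_\alpha}{T_\alpha}$ with $\bigl(\frac{\bv_\alpha}{T_\alpha} - \frac{\bv_\beta}{T_\beta}\bigr)\cdot\frac{\bv_\beta}{T_\beta}$, whose sum equals $-\bigl|\frac{\bv_\beta}{T_\beta} - \frac{\bv_\alpha}{T_\alpha}\bigr|^2$, producing $\frac{\kappa_1}{2N}\sum_{\alpha,\beta}\psi_{\alpha\beta}\bigl|\frac{\bv_\beta}{T_\beta} - \frac{\bv_\alpha}{T_\alpha}\bigr|^2$. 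Adding the two yields
\[
\frac{d\mathcal{S}}{dt} = \frac{\kappa_2}{2N}\sum_{\alpha,\beta}\zeta_{\alpha\beta}\Bigl(\frac{1}{T_\alpha} - \frac{1}{T_\beta}\Bigr)^2 + \frac{\kappa_1}{2N}\sum_{\alpha,\beta}\psi_{\alpha\beta}\Bigl|\frac{\bv_\beta}{T_\beta} - \frac{\bv_\alpha}{T_\alpha}\Bigr|^2 \ \geq\ 0,
\]
since $\kappa_1, \kappa_2 \ge 0$ and the weights are nonnegative. I do not anticipate a genuine obstacle; the only point requiring care is the index bookkeeping in the kinetic term — making sure the cross terms reassemble into a perfect square rather than leaving a spurious remainder — together with the implicit but indispensable use of the symmetry and nonnegativity of $(\psi_{\alpha\beta})$ and $(\zeta_{\alpha\beta})$.
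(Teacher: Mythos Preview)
Your argument is correct: the derivation of $\dot T_\alpha$ from \eqref{TCS-1}$_2$--\eqref{TCS-1}$_3$ is right, and the symmetrization $(\alpha,\beta)\leftrightarrow(\beta,\alpha)$ applied separately to the $\zeta$- and $\psi$-sums produces exactly the two sums of squares you write, so $\dot{\mathcal S}\ge 0$ follows. The paper itself does not give a proof of Proposition~\ref{P2.2} at all --- it simply cites \cite{H-R} --- so there is no in-paper argument to compare against; your symmetrization is the standard route (and is precisely the technique the paper does spell out for the analogous TK statement in Proposition~\ref{P2.3}).
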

\begin{proof}
The proof can be found in \cite{H-R}.
\end{proof}
 \subsection{The TK model} \label{sec:2.2}
 In this subsection, we present the derivation of the TK model from the TCS model by adapting heuristic argument which has been used in the derivation of the Kuramoto model from the Landau-Stuart model, and then we study several a priori estimates on temperatures for later discussion.
 
 \subsubsection{Derivation of the TK model} \label{sec:2.2.1}
Consider an ensemble of TCS particles on the plane, and we follow the Kuramoto's heuristics by assuming  the following relation: 
 \[  \Big| \frac{\bv_\alpha}{T_\alpha} \Big| = \Big| \frac{\bv_\beta}{T_\beta} \Big|, \quad \alpha, \beta = 1, \cdots, N. \]
 Now, we introduce a heading angle (or phase) $\theta_\alpha$ such that
 \begin{equation} \label{B-4}
 \frac{\bv_\alpha}{T_\alpha} = \frac{\eta}{T_*} e^{{\mathrm i} \theta_\alpha},  \quad \mbox{or equivalently} \quad \bv_\alpha = \eta \frac{T_\alpha}{T_*} e^{{\mathrm i} {\theta_\alpha} }, \quad \alpha = 1, \cdots, N,
 \end{equation}
 where $\eta$ is a constant parameter with speed dimension, and $T_*$ is a constant reference temperature. \newline
 
\noindent $\bullet$~(Derivation of phase evolution):  We substitute the ansatz \eqref{B-4} into $\eqref{TCS-1}_2$ and $\eqref{TCS-1}_3$ to see
\[
\dot{\bv}_\alpha =\frac{\eta}{T_*}\dot{T}_\alpha e^{\mathrm{i}\theta_\alpha}+\frac{\eta}{T_*} T_\alpha e^{\mathrm{i}\theta_\alpha} \mathrm{i}\dot{\theta}_\alpha 
= \frac{\kappa_1}{N}\sum_{\beta}\psi_{\alpha\beta}\left(\frac{\eta}{T_*} e^{\mathrm{i}\theta_\beta}-\frac{\eta}{T_*} e^{\mathrm{i}\theta_\alpha}\right),
\]
or after further simplification,
\begin{align*}
\dot{T}_\alpha +   \mathrm{i} T_\alpha \dot{\theta}_\alpha = \frac{\kappa_1}{N}\sum_{\beta}\psi_{\alpha\beta}\left( e^{\mathrm{i}(\theta_{\beta}-\theta_\alpha)}-1 \right).
\end{align*}
Now, we compare the imaginary part of the above relation to get 
\[ T_\alpha\dot{\theta}_\alpha= \frac{\kappa_1}{N}\sum_{\beta}\psi_{\alpha\beta}\sin(\theta_\beta-\theta_\alpha). \]
Once all temperatures $T_\alpha$ are strictly positive, then the above relation can be rewritten as 
\begin{equation} \label{B-5}
{\dot \theta}_\alpha =  \frac{\kappa_1}{N T_\alpha}\sum_{\beta}\psi_{\alpha\beta}\sin(\theta_\beta-\theta_\alpha),
\end{equation}
and we may add a natural frequency $\nu_\alpha$ in the R.H.S of \eqref{B-5} to obtain a Kuramoto-type model for $\theta_\alpha$:
\begin{equation} \label{B-5-1}
{\dot \theta}_\alpha =  \nu_\alpha + \frac{\kappa_1}{N T_\alpha}\sum_{\beta}\psi_{\alpha\beta}\sin(\theta_\beta-\theta_\alpha).
\end{equation}

\vspace{0.2cm}

\noindent $\bullet$~(Derivation of temperature evolution): Note that relation \eqref{B-4} implies
\begin{equation} \label{B-6}
\frac{1}{2} |\bv_\alpha|^2 = \frac{\eta^2}{2T_*^2} |T_\alpha|^2, \quad \mbox{i.e.,} \quad \frac{d}{dt}  |\bv_\alpha|^2 = \frac{2\eta^2}{T_*^2} T_\alpha {\dot T}_\alpha. 
\end{equation}
Hence, the equation $\eqref{TCS-1}_3$ becomes 
\[    {\dot T}_\alpha + \frac{\eta^2}{T_*^2} T_\alpha {\dot T}_\alpha = \frac{\kappa_2}{N}\sum_{\beta}\zeta_{\alpha\beta}\left(\frac{1}{T_\alpha}-\frac{1}{T_\beta}\right),            \]
or equivalently, 
\begin{equation} \label{B-7}
{\dot T}_\alpha = \frac{\kappa_2T_*^2}{N(T_*^2+\eta^2T_\alpha)}\sum_{\beta}\zeta_{\alpha\beta}\left(\frac{1}{T_\alpha}-\frac{1}{T_\beta}\right).
\end{equation}
Finally, our desired Kuramoto-type model consists of two equations \eqref{B-5-1} and \eqref{B-7}:
\begin{equation} \label{B-8}
\begin{cases}
\displaystyle \dot{\theta}_\alpha = \nu_\alpha+\displaystyle\frac{\kappa_1}{N}\sum_{\beta} \frac{\psi_{\alpha\beta}}{T_\alpha} \sin(\theta_\beta-\theta_\alpha), \\
\displaystyle \dot{T}_\alpha = \frac{\kappa_2}{N}\sum_{\beta} \frac{\zeta_{\alpha\beta}T_*^2}{(T_*^2+\eta^2T_\alpha)}\left(\frac{1}{T_\alpha}-\frac{1}{T_\beta}\right),
\end{cases}
\end{equation}
where network topologies $(\psi_{\alpha \beta})$ and $(\zeta_{\alpha \beta})$ are assumed to be positive and symmetric:
\begin{equation} \label{B-9}
\psi_{\alpha \beta}=\psi_{\beta\alpha} > 0, \qquad \zeta_{\alpha \beta}=\zeta_{\beta\alpha} > 0, \quad 1 \leq \alpha, \beta \leq N.
\end{equation}
From now on, we call system \eqref{B-8} as the {\it ``thermodynamic Kuramoto(TK) model"} throughout the paper.

As a thermodynamically consistent model, we here consider the entropy $\mathcal{S}$ in \eqref{Entropy} as in TCS model.
Then, Lemma \ref{L2.1} immediately guarantees that the function ${\mathcal S}$ is well-defined as long as the initial temperatures are all strictly positive, and we can verify the following entropy principle consistent with the second law of thermodynamics:
\begin{proposition} \label{P2.3}
    Let $\{T_\alpha \}_{\alpha=1}^N$ be a solution to $\eqref{B-8}_2$, where the initial data $\{T_\alpha^{in} \}_{\alpha=1}^N$ and system parameters $\kappa_2,\zeta_{\alpha \beta}$ satisfy 
    \[T_\alpha^{in}>0,\quad \kappa_2\geq 0,\quad \zeta_{\alpha \beta}\geq 0.\quad \forall~ \alpha,\beta=1,\cdots,N. \] Then, the total entropy is nondecreasing over time: 
    \[\frac{d\mathcal{S}}{dt}\geq 0,\quad t>0. \] 
\end{proposition}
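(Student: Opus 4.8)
The plan is to differentiate $\mathcal{S}$ along the flow and rewrite the resulting double sum in a manifestly nonnegative form, mimicking the symmetrization that underlies the TCS entropy inequality in Proposition~\ref{P2.2}. First I would invoke Lemma~\ref{L2.1} to guarantee $T_\alpha(t)>0$ for all $t>0$ when $T_\alpha^{in}>0$, so that $\mathcal{S}(t)=\sum_\alpha \ln T_\alpha(t)$ is well defined and $C^1$ along solutions, with
\[
\frac{d\mathcal{S}}{dt}=\sum_{\alpha}\frac{\dot T_\alpha}{T_\alpha}
=\frac{\kappa_2}{N}\sum_{\alpha,\beta}\zeta_{\alpha\beta}\,\frac{T_*^2}{T_\alpha(T_*^2+\eta^2 T_\alpha)}\Big(\frac{1}{T_\alpha}-\frac{1}{T_\beta}\Big).
\]

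Next I would introduce the scalar functions $g(s):=T_*^2/\big(s(T_*^2+\eta^2 s)\big)$ and $h(s):=1/s$ on $(0,\infty)$, so that the right-hand side above equals $\frac{\kappa_2}{N}\sum_{\alpha,\beta}\zeta_{\alpha\beta}\,g(T_\alpha)\big(h(T_\alpha)-h(T_\beta)\big)$. Swapping the dummy indices $\alpha\leftrightarrow\beta$ and using the symmetry $\zeta_{\alpha\beta}=\zeta_{\beta\alpha}$ from \eqref{B-9}, this also equals $\frac{\kappa_2}{N}\sum_{\alpha,\beta}\zeta_{\alpha\beta}\,g(T_\beta)\big(h(T_\beta)-h(T_\alpha)\big)$; averaging the two representations yields the symmetric identity
\[
\frac{d\mathcal{S}}{dt}=\frac{\kappa_2}{2N}\sum_{\alpha,\beta}\zeta_{\alpha\beta}\,\big(g(T_\alpha)-g(T_\beta)\big)\big(h(T_\alpha)-h(T_\beta)\big).
\]

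The final step is the monotonicity observation: both $g$ and $h$ are strictly decreasing on $(0,\infty)$, since $h'(s)<0$ trivially and $g$ is the reciprocal of $T_*^2 s+\eta^2 s^2$, which is strictly increasing for $s>0$ (recall $T_*>0$ and $\eta^2\ge0$). Hence for every pair the factors $g(T_\alpha)-g(T_\beta)$ and $h(T_\alpha)-h(T_\beta)$ share the same sign, so each summand is nonnegative; combined with $\kappa_2\ge0$ and $\zeta_{\alpha\beta}\ge0$ this gives $d\mathcal{S}/dt\ge0$ for $t>0$, which is the claimed entropy principle.

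I do not expect a genuine obstacle here: the only point that needs a moment's thought is that the non-symmetric prefactor $T_*^2/\big(T_\alpha(T_*^2+\eta^2 T_\alpha)\big)$ appearing in $\eqref{B-8}_2$ is itself a monotone function of $T_\alpha$, which is exactly what makes the pairwise products nonnegative after symmetrization; apart from recognizing this, the argument is the same co-monotonicity bookkeeping already used for the TCS model.
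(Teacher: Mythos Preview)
Your proof is correct and follows essentially the same route as the paper: compute $d\mathcal{S}/dt$, symmetrize the double sum using $\zeta_{\alpha\beta}=\zeta_{\beta\alpha}$, and verify that each summand is nonnegative. The only cosmetic difference is that the paper carries out the algebra explicitly, factoring the symmetrized summand as
\[
\frac{\zeta_{\alpha\beta}T_*^2\big[T_*^2+\eta^2(T_\alpha+T_\beta)\big](T_\beta-T_\alpha)^2}{T_\alpha^2 T_\beta^2(T_*^2+\eta^2 T_\alpha)(T_*^2+\eta^2 T_\beta)}\ge 0,
\]
whereas you reach the same conclusion by the cleaner co-monotonicity observation that both $g(s)=T_*^2/\big(s(T_*^2+\eta^2 s)\big)$ and $h(s)=1/s$ are decreasing on $(0,\infty)$.
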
    
    \begin{proof}
        Note that the temporal derivative of $\mathcal{S}$ has a form 
        \[ 
        \frac{d\mathcal{S}}{dt} =\sum_{\alpha}\frac{1}{T_\alpha}\frac{dT_\alpha}{dt} =\frac{\kappa_2}{N}\sum_{\alpha,\beta}\frac{\zeta_{\alpha \beta}T_*^2}{T_\alpha(T_*^2+\eta^2T_\alpha) }\left(\frac{1}{T_\alpha}-\frac{1}{T_\beta} \right).
        \]
        Then, we use the index changing for $\alpha$ and $\beta$ in the above summation to conclude the non-decreasing of $\mathcal{S}$:
        \[\begin{aligned}
        \frac{d\mathcal{S}}{dt}&=\frac{\kappa_2}{N}\sum_{\alpha,\beta}\frac{\zeta_{\alpha \beta}T_*^2}{T_\alpha(T_*^2+\eta^2T_\alpha) }\left(\frac{1}{T_\alpha}-\frac{1}{T_\beta} \right)\\
        &=\frac{\kappa_2}{2N}\sum_{\alpha,\beta}\left(\frac{\zeta_{\alpha \beta}T_*^2}{T_\alpha(T_*^2+\eta^2T_\alpha) }-\frac{\zeta_{\beta \alpha}T_*^2}{T_\beta(T_*^2+\eta^2T_\beta) }\right)\left(\frac{1}{T_\alpha}-\frac{1}{T_\beta}\right)\\
        &=\frac{\kappa_2}{2N}\sum_{\alpha,\beta}\frac{\zeta_{\alpha \beta}T_*^2\left[T_\beta(T_*^2+\eta^2T_\beta)-T_\alpha(T_*^2+\eta^2T_\alpha)\right](T_\beta-T_\alpha) }{T_\alpha^2T_\beta^2(T_*^2+\eta^2T_\alpha)(T_*^2+\eta^2T_\beta)}\\
        &=\frac{\kappa_2}{2N}\sum_{\alpha,\beta}\frac{\zeta_{\alpha \beta}T_*^2\left[T_*^2+\eta^2(T_\alpha+T_\beta) \right](T_\beta-T_\alpha)^2 }{T_\alpha^2T_\beta^2(T_*^2+\eta^2T_\alpha)(T_*^2+\eta^2T_\beta)}\\
        &\geq 0.
        \end{aligned} \]
    \end{proof}

\subsubsection{Temperature relaxation} \label{sec:2.2.2}
In this part, we study positivity and exponential alignment for temperature field and asymptotic conservation law for the TK model. In the next lemma, we show that $T_\alpha$ are bounded above and has a positive lower bound along with the thermodynamic Kuramoto flow. This monotonicity result on maximal and minimal temperatures guarantees the global well-posedness of \eqref{B-8} -- \eqref{B-9}.

\begin{lemma} \label{L2.1}
    Let $\{(\theta_{\alpha}, T_{\alpha})\}_{\alpha=1}^N$ be a solution to  system \eqref{B-8} -- \eqref{B-9} in a finite time-interval $[0, \tau)$, where the initial data $\{(\theta^{in}_{\alpha}, T^{in}_{\alpha})\}_{\alpha=1}^N$ and system parameters $\kappa_1,\kappa_2$ satisfy
    \[  \kappa_1,\kappa_2> 0,\quad \min_{\alpha} T^{in}_\alpha > 0.   \]
     Then, we have
    \[ 0<\min_{\alpha} T^{in}_\alpha\leq  \min_{\alpha} T_{\alpha}(t) \leq \max_{\alpha} T_{\alpha}(t)\leq \max_{\alpha} T^{in}_{\alpha},\quad t\in(0,\tau).   \]
\end{lemma}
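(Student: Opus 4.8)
The plan is to prove both the upper bound on $\max_\alpha T_\alpha$ and the lower bound on $\min_\alpha T_\alpha$ by a monotonicity argument applied to the extremal temperatures, using the Rademacher-type differentiability of $\max$ and $\min$ of finitely many $C^1$ functions. First I would fix $t \in (0,\tau)$ and let $\mathcal{M}(t) := \max_\alpha T_\alpha(t)$ and $m(t) := \min_\alpha T_\alpha(t)$. Since the $T_\alpha$ are $C^1$ on $[0,\tau)$ (from local well-posedness, which requires the temperatures to stay positive — so strictly this is a bootstrap/continuity argument on the maximal interval of positivity), both $\mathcal{M}$ and $m$ are Lipschitz, hence differentiable almost everywhere, and at a.e. $t$ their derivative equals $\dot T_\alpha(t)$ for some index $\alpha$ realizing the extremum.

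The key step is the sign analysis of $\dot T_\alpha$ at an extremal index. Suppose at time $t$ the index $\alpha$ satisfies $T_\alpha(t) = \mathcal{M}(t) \geq T_\beta(t)$ for all $\beta$. Then $\frac{1}{T_\alpha} - \frac{1}{T_\beta} \leq 0$ for every $\beta$, and since the prefactor $\frac{\kappa_2 \zeta_{\alpha\beta} T_*^2}{N(T_*^2 + \eta^2 T_\alpha)}$ is nonnegative (here $T_\alpha > 0$ is being assumed on the interval under consideration, and $\kappa_2, \zeta_{\alpha\beta} \geq 0$), every summand in $\eqref{B-8}_2$ is $\leq 0$, so $\dot T_\alpha(t) \leq 0$. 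Hence $\frac{d}{dt}\mathcal{M}(t) \leq 0$ for a.e. $t$, and since $\mathcal{M}$ is absolutely continuous this gives $\mathcal{M}(t) \leq \mathcal{M}(0) = \max_\alpha T_\alpha^{in}$. Symmetrically, if $T_\alpha(t) = m(t) \leq T_\beta(t)$ for all $\beta$, then $\frac{1}{T_\alpha} - \frac{1}{T_\beta} \geq 0$ for every $\beta$, so every summand is $\geq 0$, giving $\dot T_\alpha(t) \geq 0$; thus $m$ is nondecreasing and $m(t) \geq m(0) = \min_\alpha T_\alpha^{in} > 0$.

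The main obstacle — really the only subtle point — is the circularity between positivity and well-posedness: the model $\eqref{B-8}$ and the differentiability of $T_\alpha$ are only available while all temperatures are positive, yet positivity is what we are trying to establish. The clean way to handle this is a continuity argument: let $\tau_* \leq \tau$ be the supremum of times on which $\min_\alpha T_\alpha > 0$ holds; on $[0,\tau_*)$ the solution is $C^1$ and the above monotonicity argument applies, yielding $m(t) \geq \min_\alpha T_\alpha^{in} > 0$ on $[0,\tau_*)$; by continuity this bound persists at $t = \tau_*$, contradicting maximality unless $\tau_* = \tau$. One also must justify the a.e.-differentiability and fundamental-theorem-of-calculus step for $\mathcal{M}$ and $m$; this is standard (Danskin's lemma / the fact that a Lipschitz function is the integral of its a.e. derivative), and an alternative that avoids it entirely is a Grönwall-type comparison: on any short interval one may bound $\dot T_\alpha \le \frac{C}{N}\sum_\beta \zeta_{\alpha\beta}\bigl(\tfrac{1}{T_\alpha} - \tfrac{1}{\mathcal{M}}\bigr)$-type expressions, but the extremal-index argument is cleaner and is the route I would present.
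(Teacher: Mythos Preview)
Your argument is correct and shares the same core idea as the paper's proof: establish monotonicity of the extremal temperatures by checking the sign of $\dot T_\alpha$ at an index realizing the current maximum or minimum. The difference is purely in how the non-smoothness of $t\mapsto\max_\alpha T_\alpha(t)$ is handled. The paper exploits analyticity of the vector field to conclude that any two temperatures $T_\alpha,T_\beta$ can coincide only finitely many times on $[0,\tau)$; this produces a finite partition $0=\tau_0<\cdots<\tau_n=\tau$ on each subinterval of which the extremal index is fixed, so the extremum is classically differentiable there and one simply reads off $\dot T_m\ge 0$, $\dot T_M\le 0$. You instead use that a pointwise maximum of finitely many $C^1$ functions is Lipschitz, invoke Rademacher/Danskin to get $\dot{\mathcal M}(t)\le 0$ and $\dot m(t)\ge 0$ almost everywhere, and then integrate. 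Your route is marginally more general (it requires only $C^1$ solutions, not analyticity) and is the standard device in the flocking literature when analyticity is unavailable; the paper's route avoids any measure-theoretic language and gives a slightly cleaner monotonicity statement on each subinterval. You also make explicit the bootstrap argument that resolves the circularity between positivity and well-posedness, which the paper leaves implicit in the phrase ``real analytic until some $T_\alpha$ becomes zero.''
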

\begin{proof}   Since the R.H.S. of \eqref{B-8} is analytic for $\theta_\alpha\in\mathbb{R}$ and $T_\alpha>0$, the solution $(\theta_\alpha, T_\alpha)$ is real analytic until some $T_\alpha$ becomes zero. In particular, the zero set of $T_\alpha - T_\beta$ is finite in any finite-time interval. Hence, there are only finitely many crossings between $T_\alpha'$s, i.e.,  there exists a time sequence $\{\tau_i\}_{i=0}^n$ with order 
        \[ 0 = \tau_0 < \tau_1 < \cdots < \tau_n = \tau, \]
        such that we can choose a unique minimal index $m$ on each time-interval $(\tau_{i-1}, \tau_i)$:
        \[ T_m(t) = \min_{\alpha} T_\alpha(t), \quad t \in (\tau_{i-1}, \tau_i). \]
        Moreover, on the interval $(\tau_{i-1}, \tau_i)$, one has
        \[  \dot{T}_m = \frac{\kappa_2}{N}\sum_{\beta} \frac{\zeta_{m\beta}T_*^2}{(T_*^2+\eta^2T_m)}\left(\frac{1}{T_m}-\frac{1}{T_\beta}\right) \geq 0.    \]  Then, we combine the above monotonic increasing property in each time interval $(\tau_{i-1},\tau_i)$ and the continuity of the minimal temperature $T_m$ to conclude the desired result. We can also use the same argument for the maximal temperature $\max_{\alpha} T_\alpha$ to show the monotonic decreasing property.
\end{proof}
Note that temperature field $\{T_\alpha\}_\alpha$ has a decoupled dynamics from that of phase, thus we can further analyze the asymptotic behavior. First, we set 
\[ \zeta_{\min}:=\min_{\alpha,\beta} \zeta_{\alpha \beta}, \quad \zeta_{\max}:=\max_{\alpha,\beta} \zeta_{\alpha \beta}. \]
\begin{proposition}\label{P2.4}
    Let $\{(\theta_{\alpha},T_\alpha) \}_{\alpha=1}^N$ be a global-in-time solution to the system \eqref{B-8} -- \eqref{B-9}, where the initial data $\{(\theta^{in}_{\alpha}, T^{in}_{\alpha})\}_{\alpha=1}^N$ and system parameters $\kappa_1,\kappa_2$ satisfy
    \[\kappa_1,\kappa_2> 0,\quad 0<T_1^{in} \leq \cdots\leq T_N^{in} <\infty.  \]
    Then, there exists a positive asymptotic temperature $T^\infty>0$ determined by the relation
    \begin{equation} \label{B-10}
    T^\infty+\frac{\eta^2}{2T_*^2}(T^{\infty})^2 =\frac{1}{N}\sum_{\alpha} \left({T^{in}_\alpha}+\frac{\eta^2}{2T_*^2}{T^{in}_\alpha}^2\right), 
    \end{equation}
    and we have an exponential consensus of temperatures to $T^\infty$: for every positive $\Lambda < \frac{\kappa_2\zeta_{\min}T_*^2}{{{T^\infty}^2}(T_*^2+\eta^2{T^\infty})}$, we have
    \[ \max_{\alpha}|T_\alpha(t)-T^\infty| \lesssim e^{-\Lambda t}. \]

\end{proposition}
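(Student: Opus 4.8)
The plan is to extract a conservation law hidden in $\eqref{B-8}_2$, use it to pin down $T^\infty$, and then run a Gr\"onwall-type argument on the temperature diameter, first with a crude rate and then bootstrapped to the sharp one. \textbf{Step 1 (conserved energy and definition of $T^\infty$).} I would introduce the per-site energy $E_\alpha := T_\alpha + \frac{\eta^2}{2T_*^2} T_\alpha^2$. As a function of $T_\alpha\in[0,\infty)$ it is a strictly increasing bijection onto $[0,\infty)$ (derivative $(T_*^2+\eta^2 T_\alpha)/T_*^2>0$), so, writing $E$ for this scalar map, $T^\infty:=E^{-1}\!\big(\tfrac1N\sum_\alpha E_\alpha^{in}\big)$ is the unique positive number satisfying \eqref{B-10}. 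Multiplying $\eqref{B-8}_2$ by $(T_*^2+\eta^2 T_\alpha)/T_*^2$ gives $\dot E_\alpha=\frac{\kappa_2}{N}\sum_\beta\zeta_{\alpha\beta}\big(\frac1{T_\alpha}-\frac1{T_\beta}\big)$, and the symmetry $\zeta_{\alpha\beta}=\zeta_{\beta\alpha}$ in \eqref{B-9} forces $\sum_\alpha\dot E_\alpha=0$; hence $\frac1N\sum_\alpha E_\alpha(t)$ is a constant of motion equal to the right-hand side of \eqref{B-10}. Combining this with $\min_\alpha E_\alpha(t)\le\frac1N\sum_\alpha E_\alpha(t)\le\max_\alpha E_\alpha(t)$ and monotonicity of $E$ yields the sandwich $\min_\alpha T_\alpha(t)\le T^\infty\le\max_\alpha T_\alpha(t)$ for all $t\ge0$; in particular $\max_\alpha|T_\alpha(t)-T^\infty|\le\max_\alpha T_\alpha(t)-\min_\alpha T_\alpha(t)$, so it suffices to bound the temperature diameter.

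\textbf{Step 2 (diameter decay, crude rate).} Next I would study $\mathcal E(t):=\max_\alpha E_\alpha(t)-\min_\alpha E_\alpha(t)$. Since the flow is real analytic as long as the temperatures stay positive (Lemma \ref{L2.1}), the extremal indices change only finitely often on bounded intervals, so $\mathcal E$ is piecewise-$C^1$ and $\dot{\mathcal E}=\dot E_M-\dot E_m$ for a.e. $t$, where $T_M=\max_\alpha T_\alpha$, $T_m=\min_\alpha T_\alpha$ (the extremal indices of $E_\alpha$ and $T_\alpha$ coincide because $E$ is increasing). For every $\beta$ one has $\frac1{T_M}\le\frac1{T_\beta}\le\frac1{T_m}$ and $\zeta_{M\beta},\zeta_{m\beta}\ge\zeta_{\min}>0$, so each summand of $\dot E_M$ (resp. $\dot E_m$) is bounded above (resp. below) by its value with $\zeta_{\min}$ in place of $\zeta_{M\beta}$ (resp. $\zeta_{m\beta}$); subtracting, the $\beta$-indexed terms telescope and the factor $N$ drops out, leaving
\[
\dot{\mathcal E}\ \le\ \kappa_2\zeta_{\min}\Big(\frac1{T_M}-\frac1{T_m}\Big)\ =\ -\,\frac{\kappa_2\zeta_{\min}}{T_M T_m\big(1+\frac{\eta^2}{2T_*^2}(T_M+T_m)\big)}\,\mathcal E\ \le\ 0,
\]
where I used $E_M-E_m=(T_M-T_m)\big(1+\frac{\eta^2}{2T_*^2}(T_M+T_m)\big)$. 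Feeding in the uniform bounds $T_\alpha(t)\in[T_1^{in},T_N^{in}]$ from Lemma \ref{L2.1} gives a first, crude exponential decay $\mathcal E(t)\lesssim e^{-c_0 t}$; hence $\max_\alpha T_\alpha(t)-\min_\alpha T_\alpha(t)\le\mathcal E(t)\to0$ and, by the sandwich of Step 1, $T_M(t),T_m(t)\to T^\infty$.

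\textbf{Step 3 (bootstrap to the sharp rate).} Finally, given $\Lambda$ with $0<\Lambda<\frac{\kappa_2\zeta_{\min}T_*^2}{{T^\infty}^2(T_*^2+\eta^2 T^\infty)}$, I would pick $\e>0$ small enough that $\frac{\kappa_2\zeta_{\min}T_*^2}{(T^\infty+\e)^2(T_*^2+\eta^2(T^\infty+\e))}>\Lambda$, and then, using $T_M(t),T_m(t)\to T^\infty$, choose $t_\e$ with $T_\alpha(t)\in[T^\infty-\e,T^\infty+\e]$ for all $t\ge t_\e$. Inserting these refined bounds into the differential inequality of Step 2 gives $\dot{\mathcal E}\le-\Lambda\mathcal E$ on $[t_\e,\infty)$, hence $\mathcal E(t)\le\mathcal E(t_\e)e^{-\Lambda(t-t_\e)}$, and therefore $\max_\alpha|T_\alpha(t)-T^\infty|\le\max_\alpha T_\alpha(t)-\min_\alpha T_\alpha(t)\le\mathcal E(t)\lesssim e^{-\Lambda t}$, which is the claim.

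The only genuinely technical point is the a.e.\ differentiability of $\mathcal E$ in the presence of crossings of the extremal indices; this is exactly the device already used in the proof of Lemma \ref{L2.1} and can simply be quoted. The telescoping that removes the $N$-dependence and produces precisely the constant $\frac{\kappa_2\zeta_{\min}T_*^2}{{T^\infty}^2(T_*^2+\eta^2 T^\infty)}$, together with the bootstrap, are then elementary.
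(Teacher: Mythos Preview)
Your proof is correct and follows essentially the same strategy as the paper: an extremal/Gr\"onwall argument on the temperature diameter yielding a crude exponential rate, followed by a bootstrap to reach any $\Lambda$ below the sharp threshold, with $T^\infty$ identified via the conservation of $\sum_\alpha\!\big(T_\alpha+\tfrac{\eta^2}{2T_*^2}T_\alpha^2\big)$. The one noteworthy difference is organizational: you introduce the per-site energy $E_\alpha$ first, which both pins down $T^\infty$ from the outset (giving the convenient sandwich $T_m\le T^\infty\le T_M$) and removes the prefactor $T_*^2/(T_*^2+\eta^2 T_\alpha)$ from the evolution, so the telescoping in Step~2 is a bit cleaner than the paper's direct estimate on $T_M-T_m$; the resulting decay coefficients differ in form but coincide in the limit $T_M,T_m\to T^\infty$, and the bootstrap step is identical in spirit.
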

\begin{proof}
    We first prove the existence of the common limit $T^\infty$. To prove this, we estimate the difference between maximal and minimal temperatures for each time $t>0$. Since Lemma \ref{L2.1} guarantees the global well-posedness of \eqref{B-8}--\eqref{B-9}, there exists a discrete time sequence $\{\tau_i\}_{i\geq 0}$  with order 
    \[0=\tau_0<\tau_1<\cdots, \]
    such that we can choose a unique maximal and minimal indices $M$ and $m$ for temperatures $\{T_\alpha\}_{\alpha}$ on each time interval $(\tau_{i-1},\tau_i)$:
    \[T_M(t):=\max_{\alpha}T_\alpha(t),\quad T_m(t):=\min_{\alpha}T_\alpha(t),\quad t\in(\tau_{i-1},\tau_i). \]
    Then, we further analyze the maximal difference $T_M-T_m$ among temperatures 
    \[\begin{aligned}
    \dot{T}_M-\dot{T}_m&=\frac{\kappa_2}{N}\sum_{\beta} \frac{\zeta_{M\beta}T_*^2}{(T_*^2+\eta^2T_M)}\left(\frac{1}{T_M}-\frac{1}{T_\beta}\right)-\frac{\kappa_2}{N}\sum_{\beta} \frac{\zeta_{m\beta}T_*^2}{(T_*^2+\eta^2T_m)}\left(\frac{1}{T_m}-\frac{1}{T_\beta}\right)\\
    &= \frac{\kappa_2}{N}\sum_{\beta} \frac{\zeta_{M\beta}T_*^2}{(T_*^2+\eta^2T_M)}\left(\frac{T_\beta-T_M}{T_MT_\beta}\right)+\frac{\kappa_2}{N}\sum_{\beta} \frac{\zeta_{m\beta}T_*^2}{(T_*^2+\eta^2T_m)}\left(\frac{T_m-T_\beta}{T_mT_\beta}\right)\\
    &\leq \frac{\kappa_2}{N}\sum_{\beta} \frac{\zeta_{M\beta}T_*^2}{(T_*^2+\eta^2T_M)}\left(\frac{T_\beta-T_M}{T_M^2}\right)+\frac{\kappa_2}{N}\sum_{\beta} \frac{\zeta_{m\beta}T_*^2}{(T_*^2+\eta^2T_M)}\left(\frac{T_m-T_\beta}{T_M^2}\right),
    \end{aligned} \]
    and obtain the following Gr\"onwall inequality by using Lemma \ref{L2.1}:
    \[\begin{aligned}
    \frac{d}{dt}({T}_M-{T}_m)\leq -\frac{\kappa_2\zeta_{\min}T_*^2}{{T_M}^2(T_*^2+\eta^2{T_M})}(T_M-T_m)\leq -\frac{\kappa_2\zeta_{\min}T_*^2}{{T^{in}_N}^2(T_*^2+\eta^2{T^{in}_N})}(T_M-T_m),
    \end{aligned} \]
    which gives the existence of the common limit $T^\infty$. Moreover, when a real number $\Lambda < \frac{\kappa_2\zeta_{\min}T_*^2}{{{T^\infty}^2}(T_*^2+\eta^2{T^\infty})}$ is given, there exists a time $t_\Lambda>0$ such that 
    \[ \Lambda < \frac{\kappa_2\zeta_{\min}T_*^2}{{T_M^2(t_\Lambda)}(T_*^2+\eta^2{T_M(t_\Lambda)})}.\]
    Therefore, we use the Gr\"onwall inequality again starting from $t=t_\Lambda$ and obtain
    \[|T_\alpha(t)-T^\infty|\leq |T_M(t)-T_m(t)|\leq |T_M(t_\Lambda)-T_m(t_\Lambda)|e^{-\Lambda(t-t_\Lambda)},\quad t>t_\Lambda,~ \alpha=1,\cdots,N. \]
    Finally, the common limit $T^\infty$ can be determined by the following conservation law:
    \[\frac{d}{dt}\sum_{\alpha} \left(T_\alpha+\frac{\eta^2}{2T_*^2}T_\alpha^2 \right)=\frac{\kappa_2}{N}\sum_{\alpha, \beta}\zeta_{\alpha \beta}\left(\frac{1}{T_\alpha}-\frac{1}{T_\beta} \right)=0. \]
\end{proof}
As a corollary of Proposition \ref{P2.4}, we have the asymptotic conservation law for the average of the total phase. 
\begin{corollary} \label{C2.1}
Let $\{ (\theta_{\alpha},T_\alpha) \}_{\alpha=1}^N$ be a global solution to system \eqref{B-8} -- \eqref{B-9}, where the initial data $\{(\theta^{in}_{\alpha}, T^{in}_{\alpha})\}_{\alpha=1}^N$ and system parameters $\kappa_1,\kappa_2$ satisfy
\[\kappa_1,\kappa_2> 0,\quad 0<T_1^{in} \leq \cdots\leq T_N^{in} <\infty.  \] Then, one has
\[  \Big|  \frac{d}{dt} \Big[  \sum_{\alpha} \theta_\alpha - t \sum_{\alpha} \nu_\alpha \Big ]  \Big|   \lesssim e^{-\Lambda t} \quad \mbox{as $t \to \infty$},     \]
i.e. the quantity $ \sum_{\alpha} \theta_\alpha - t \sum_{\alpha} \nu_\alpha$ converges exponentially.
\end{corollary}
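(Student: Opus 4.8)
The plan is to differentiate the bracketed quantity directly, exploit the symmetry $\psi_{\alpha\beta}=\psi_{\beta\alpha}$ to recognize that the resulting sum is a pure ``thermal defect'' (it vanishes in the isothermal case), and then control that defect by the temperature estimates already in hand.

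First I would sum the phase equation $\eqref{B-8}_1$ over $\alpha=1,\dots,N$ to get
\[
\frac{d}{dt}\Big[\sum_{\alpha}\theta_\alpha - t\sum_{\alpha}\nu_\alpha\Big] = \frac{\kappa_1}{N}\sum_{\alpha,\beta}\frac{\psi_{\alpha\beta}}{T_\alpha}\sin(\theta_\beta-\theta_\alpha).
\]
When all $T_\alpha$ coincide the right-hand side is $0$ by antisymmetry of $\sin(\theta_\beta-\theta_\alpha)$ together with $\psi_{\alpha\beta}=\psi_{\beta\alpha}$, so the natural move is to symmetrize in $(\alpha,\beta)$. Exchanging the roles of $\alpha$ and $\beta$ in the double sum and averaging the two resulting expressions yields
\[
\frac{d}{dt}\Big[\sum_{\alpha}\theta_\alpha - t\sum_{\alpha}\nu_\alpha\Big] = \frac{\kappa_1}{2N}\sum_{\alpha,\beta}\psi_{\alpha\beta}\Big(\frac{1}{T_\alpha}-\frac{1}{T_\beta}\Big)\sin(\theta_\beta-\theta_\alpha),
\]
which exhibits the decay-producing factor $\frac{1}{T_\alpha}-\frac{1}{T_\beta}$.

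Next I would estimate crudely. Using $|\sin(\theta_\beta-\theta_\alpha)|\le 1$ and $\big|\tfrac{1}{T_\alpha}-\tfrac{1}{T_\beta}\big| = \tfrac{|T_\alpha-T_\beta|}{T_\alpha T_\beta}$, together with the uniform positive lower bound $T_\alpha(t)\ge T_1^{in}>0$ from Lemma \ref{L2.1}, one gets
\[
\Big|\frac{d}{dt}\Big[\sum_{\alpha}\theta_\alpha - t\sum_{\alpha}\nu_\alpha\Big]\Big| \le \frac{\kappa_1\,\max_{\alpha,\beta}\psi_{\alpha\beta}}{2N (T_1^{in})^2}\sum_{\alpha,\beta}|T_\alpha(t)-T_\beta(t)| \le \frac{\kappa_1 N \max_{\alpha,\beta}\psi_{\alpha\beta}}{(T_1^{in})^2}\,\max_{\alpha}|T_\alpha(t)-T^\infty|.
\]
Proposition \ref{P2.4} then gives $\max_{\alpha}|T_\alpha(t)-T^\infty|\lesssim e^{-\Lambda t}$ for any admissible $\Lambda$, which is exactly the claimed bound. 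Finally, since the time-derivative of $Q(t):=\sum_{\alpha}\theta_\alpha - t\sum_{\alpha}\nu_\alpha$ is absolutely integrable on $[0,\infty)$, the limit $Q^\infty:=\lim_{t\to\infty}Q(t)$ exists and $|Q(t)-Q^\infty|\le \int_t^\infty|Q'(s)|\,ds\lesssim e^{-\Lambda t}$, so the convergence is exponential.

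I do not expect a serious obstacle here: the only non-routine point is spotting that the symmetrization turns the phase sum into a quantity controlled by temperature differences, after which the conclusion is immediate from Lemma \ref{L2.1} and Proposition \ref{P2.4}. The minor technical care needed is the same as in Proposition \ref{P2.4}: $\Lambda$ must be taken strictly below the indicated threshold, and the implied constant in $\lesssim$ absorbs the finite waiting time $t_\Lambda$ as well as the geometric factors $\kappa_1$, $N$, $\max_{\alpha,\beta}\psi_{\alpha\beta}$ and $(T_1^{in})^{-2}$.
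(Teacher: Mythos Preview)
Your proof is correct and follows essentially the same route as the paper: sum the phase equation, symmetrize in $(\alpha,\beta)$ to expose the factor $\frac{1}{T_\alpha}-\frac{1}{T_\beta}$, and then bound using Lemma~\ref{L2.1} and Proposition~\ref{P2.4}. The paper states the final inequality in terms of $\mathcal{D}(T(t))$ rather than $\max_\alpha|T_\alpha(t)-T^\infty|$, and leaves the passage from ``$|Q'(t)|\lesssim e^{-\Lambda t}$'' to ``$Q(t)$ converges exponentially'' implicit, whereas you spell it out; these are cosmetic differences only.
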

\begin{proof}
We add $\eqref{B-8}_1$ over $\alpha$, and use the symmetry of $\psi_{\alpha \beta}$ to get 
\begin{align*}
\begin{aligned}
\sum_{\alpha} \dot{\theta}_\alpha - \sum_{\alpha} \nu_\alpha &= \frac{\kappa_1}{N}\sum_{\alpha, \beta} \frac{\psi_{\alpha\beta}}{T_\alpha} \sin(\theta_\beta-\theta_\alpha) = -\frac{\kappa_1}{N}\sum_{\alpha, \beta} \frac{\psi_{\alpha\beta}}{T_\beta} \sin(\theta_\beta-\theta_\alpha)  \\
&=  \frac{\kappa_1}{2N}\sum_{\alpha, \beta} \psi_{\alpha\beta} \Big( \frac{1}{T_\alpha} -  \frac{1}{T_\beta}  \Big)  \sin(\theta_\beta-\theta_\alpha). 
 \end{aligned}
 \end{align*}
This yields
\[   \Big| \frac{d}{dt} \Big(  \sum_{\alpha} \theta_\alpha - t \sum_{\alpha} \nu_\alpha \Big)  \Big| \leq \frac{\kappa_1 \max \psi_{\alpha \beta} N}{2 (T_1^{in})^2} {\mathcal D}(T(t)) \lesssim e^{-\Lambda t} \quad \mbox{as $t \to \infty$}.    \]
\end{proof}
\section{Emergence of phase synchronization} \label{sec:3}
\setcounter{equation}{0} 
In this section, we study the emergent dynamics of the TK model. Note that the temperature dependence in $\eqref{B-8}_1$ in the phase evolution breaks down the conservation law structure for the Kuramoto model, and thus several mathematical difficulties arise in the analysis of emergent dynamics. 
For instance, due to the lack of conservation law, we cannot explicitly determine the asymptotic limit of $\{\theta_{\alpha} \}$ a priori. Moreover, it is clear that the given system \eqref{B-8} does not have a gradient flow structure as Kuramoto model \cite{v-W}, which is one of key structure to analyze the emergent dynamics of the Kuramoto model for a generic initial data as in \cite{H-K-R}. Although the dynamics of TK model is somehow different with Kuramoto model, we basically follow the previous results for Kuramoto model and try to prove their analogous statements, since Kuramoto model approximates the TK model asymptotically. In that sense, we first recall definitions on the asymptotic states of Kuramoto model.
\begin{definition}
	Let $\Theta=(\theta_1,\cdots,\theta_N)$ be a solution to \eqref{KM}.
	\begin{enumerate}
		\item We say that the solution $\Theta$ exhibits a(n asymptotic) complete phase synchronization if \[\lim_{t \to \infty} |\theta_{\alpha}(t)-\theta_\beta(t)|=0,\quad \forall~ \alpha,\beta=1,\cdots,N. \]
		\item We say that the solution $\Theta$ exhibits a(n asymptotic) phase locking if 
		\[\lim_{t \to \infty} (\theta_\alpha(t)-\theta_\beta(t))~\text{exists},\quad \forall~\alpha,\beta=1,\cdots,N. \]
	\end{enumerate}
\end{definition}
On the other hand, since \eqref{KM} has a rotational symmetry and the conservation law $\sum_{\alpha}\dot\theta_\alpha=\sum_\alpha\nu_\alpha$, one might consider a rotating frame $\theta\longrightarrow\theta+\bar\nu t$, where $\bar\nu:=\frac{1}{N}\sum_{\alpha}\nu_\alpha$. In this rotating frame, the total sum of phases $\sum_\alpha\theta_\alpha$ is conserved, and the above two notions reduce to the existence of limit phases $\theta_{\alpha}^\infty=\lim_{t \to \infty}\theta_\alpha(t)$ which is identical in $\alpha$ for complete synchronization and nonidentical for phase locking, respectively. We will also use these definitions to describe the asymptotic states of TK model.
\subsection{Homogeneous TK ensemble} \label{sec:3.1}   In this subsection, we consider a homogeneous ensemble with the same natural frequency:
\[ \nu_\alpha = \nu, \quad \alpha = 1, \cdots, N. \]
Without loss of generality, we may assume $\nu= 0$, or we may consider a rotating frame with $\theta \longrightarrow \theta+ \nu t$. Then, $\{(\theta_\alpha, T_\alpha)\}$ satisfies
\begin{equation} \label{C-1}
\begin{cases}
\displaystyle \dot{\theta}_\alpha = \frac{\kappa_1}{N}\sum_{\beta} \frac{\psi_{\alpha\beta}}{T_\alpha} \sin(\theta_\beta-\theta_\alpha), \quad t > 0,~~\alpha = 1, \cdots, N, 
 \\
\displaystyle \dot{T}_\alpha = \frac{\kappa_2}{N}\sum_{\beta} \frac{\zeta_{\alpha\beta}T_*^2}{(T_*^2+\eta^2 T_\alpha)}\left(\frac{1}{T_\alpha}-\frac{1}{T_\beta}\right).
\end{cases}
\end{equation}
According to the notation introduced in Section \ref{sec:1}, for a given phase vector $\Theta = \{(\theta_\alpha)\}$, we denote a phase diameter ${\mathcal D}(\Theta)$ as:
\[ {\mathcal D}(\Theta) := \max_{\alpha, \beta} |\theta_\alpha - \theta_\beta|. \]
Then, in the following lemma, we show that the phase diameter ${\mathcal D}(\Theta)$ is contractive along the TK flow \eqref{C-1}. 
\begin{lemma}\label{L3.1}
    Let $\{(\theta_{\alpha},T_\alpha) \}_{\alpha=1}^N$ be a solution to the system \eqref{C-1}, where the  initial data $\{ (\theta_{\alpha}^{in},T_\alpha^{in}) \}_{\alpha=1}^N$ and system parameter $\kappa_1,\kappa_2,\psi_{\alpha \beta},\zeta_{\alpha \beta}$ satisfy
    \[\kappa_1,\kappa_2>0,\quad \psi_{\min}>0,\quad \zeta_{\min}>0,\quad 0<T_1^{in}\leq \cdots\leq T_N^{in}<\infty \quad \mbox{and} \quad {\mathcal D}(\Theta^{in}) <\pi. \]
    Then, ${\mathcal D}(\Theta)$ is monotonically decreasing in $t$:
    \[ {\mathcal D}(\Theta(t)) \leq {\mathcal D}(\Theta^{in}), \quad \forall~t \geq 0. \]  
\end{lemma}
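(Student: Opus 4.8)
The plan is to track the extremal phases and show the diameter cannot increase. Since the right-hand side of \eqref{C-1} is analytic in $\theta_\alpha$ and $T_\alpha$ (and $T_\alpha$ stays in a compact subset of $(0,\infty)$ by Lemma \ref{L2.1}), the solution is real analytic, so the functions $\theta_\alpha-\theta_\beta$ have only finitely many zeros on any finite interval. Hence there is a partition $0=\tau_0<\tau_1<\cdots$ on whose open subintervals one may fix indices $M$ and $m$ with $\theta_M(t)=\max_\alpha\theta_\alpha(t)$ and $\theta_m(t)=\min_\alpha\theta_\alpha(t)$, and by continuity it suffices to prove $\frac{d}{dt}(\theta_M-\theta_m)\le 0$ on each such subinterval; the global bound then follows by concatenation. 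This is exactly the Lyapunov/extremal-index bookkeeping already used in the proofs of Lemma \ref{L2.1} and Proposition \ref{P2.4}, so I would only cite it briefly.

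On a fixed subinterval I would write
\[
\frac{d}{dt}(\theta_M-\theta_m)=\frac{\kappa_1}{N}\sum_{\beta}\Big(\frac{\psi_{M\beta}}{T_M}\sin(\theta_\beta-\theta_M)-\frac{\psi_{m\beta}}{T_m}\sin(\theta_\beta-\theta_m)\Big),
\]
and argue termwise. The key geometric fact is that ${\mathcal D}(\Theta^{in})<\pi$, combined with the contractivity we are proving (a standard continuity/bootstrap argument: as long as ${\mathcal D}(\Theta)<\pi$ the derivative estimate below holds, hence ${\mathcal D}(\Theta)$ cannot first reach $\pi$), guarantees $\theta_\beta-\theta_M\in(-\pi,0]$ and $\theta_\beta-\theta_m\in[0,\pi)$ for all $\beta$. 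On these ranges $\sin(\theta_\beta-\theta_M)\le 0$ and $\sin(\theta_\beta-\theta_m)\ge 0$, while $\psi_{M\beta},\psi_{m\beta}>0$ and $T_M,T_m>0$; therefore every summand is $\le 0$, giving $\frac{d}{dt}(\theta_M-\theta_m)\le 0$. (For $\beta=M$ and $\beta=m$ the corresponding sine vanishes, so those terms are harmless.) Unlike the temperature estimate in Proposition \ref{P2.4}, here I do not even need a strict Gr\"onwall rate — monotone non-increase is all the statement claims — so the positivity of $\psi_{\alpha\beta}$, $T_\alpha$ and the sign of the sines is enough, and the heterogeneity of $\psi_{\alpha\beta}$ causes no trouble because each term is separately signed.

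The one point needing care — and the main (minor) obstacle — is the interplay between "the sign argument needs ${\mathcal D}(\Theta)<\pi$" and "we are trying to prove ${\mathcal D}(\Theta)$ stays $<\pi$", i.e. avoiding circularity. I would handle this by a standard continuation argument: let $\tau^*:=\sup\{t\ge0:{\mathcal D}(\Theta(s))<\pi\ \text{for all}\ s\in[0,t]\}$, which is positive since ${\mathcal D}(\Theta^{in})<\pi$ and $\Theta$ is continuous; on $[0,\tau^*)$ the termwise sign estimate applies, so ${\mathcal D}(\Theta)$ is non-increasing there, hence ${\mathcal D}(\Theta(t))\le{\mathcal D}(\Theta^{in})<\pi$ on $[0,\tau^*)$, and by continuity ${\mathcal D}(\Theta(\tau^*))\le{\mathcal D}(\Theta^{in})<\pi$ if $\tau^*<\infty$, contradicting maximality; therefore $\tau^*=\infty$ and ${\mathcal D}(\Theta(t))\le{\mathcal D}(\Theta^{in})$ for all $t\ge0$. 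A secondary technical remark to include: $\theta_M(t)$ and $\theta_m(t)$ are only Lipschitz (not $C^1$) across the crossing times $\tau_i$, so the derivative identity is read on each open subinterval and the conclusion is patched together using continuity of ${\mathcal D}(\Theta(\cdot))$ — exactly as in the earlier lemmas, so no new machinery is required.
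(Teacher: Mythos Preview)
Your proof is correct and follows essentially the same route as the paper: both track the extremal phases on subintervals determined by analyticity, use the sign of $\sin(\theta_\beta-\theta_M)\le 0$ and $\sin(\theta_\beta-\theta_m)\ge 0$ when the diameter is below $\pi$, and close the bootstrap by a continuation/contradiction argument. The only cosmetic difference is that the paper phrases the continuation step via an $\varepsilon$-set $\mathcal{S}_\varepsilon=\{t:\mathcal{D}(\Theta(t))>\mathcal{D}(\Theta^{in})+\varepsilon\}$ and shows it is empty, whereas you use $\tau^*=\sup\{t:\mathcal{D}(\Theta(s))<\pi\}$; these are equivalent standard devices.
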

\begin{proof} First, let $\varepsilon$ be any positive number satisfying 
    \[  {\mathcal D}(\Theta^{in})+\varepsilon<\pi, \]
    and suppose that the following set is nonempty:
    \begin{equation}\label{set}
    \mathcal{S}_\varepsilon:=\left\{t >0:~ {\mathcal D}(\Theta(t)) >   {\mathcal D}(\Theta^{in})+\varepsilon \right\}.
    \end{equation}
    Then, the infimum  $t_\varepsilon:=\inf\mathcal{S}_\varepsilon$ is finite and strictly positive, since $\mathcal{S}_\varepsilon$ is nonempty and $\{(\theta_{\alpha}) \}$ is continuous.\\
    
    \noindent Similar to Lemma \ref{L2.1}, we consider a time sequence $\{\tau_m\}_{m=0}^n$ 
    such that 
    \[0=\tau_0<\tau_1<\cdots<\tau_n=t_\varepsilon, \]
    and there exists a unique maximal and minimal index $M', m'$ on each time interval $(\tau_{i-1},\tau_i)$: 
    \[\theta_{M'}(t)=\max_{\alpha} \theta_{\alpha}(t),\quad \theta_{m'}(t)=\min_{\alpha}\theta_{\alpha}(t),\quad t\in (\tau_{i-1},\tau_i). \] 
    Then, we have the following estimates along each interval $(\tau_{i-1},\tau_i)$:
    \begin{equation}\label{df}
    \frac{d}{dt}(\theta_{M'}-\theta_{m'})=\frac{\kappa_1}{N}\sum_{\beta}\frac{\psi_{M' \beta}}{T_{M'}}\sin(\theta_{\beta}-\theta_{M'})-\frac{\kappa_1}{N}\sum_{\beta}\frac{\psi_{m' \beta}}{T_{m'}}\sin(\theta_{\beta}-\theta_{m'}).
    \end{equation}
    On the other hand, as long as $t$ is smaller than $t_\varepsilon$, we know that the maximal difference 
    $\theta_{M'}-\theta_{m'}$ is smaller than $\pi$:
     \[\theta_{M'}(t)-\theta_{m'}(t)\leq  {\mathcal D}(\Theta^{in}) +\varepsilon<\pi,\quad \forall t<t_\varepsilon. \]
     Thus, the right-hand side of \eqref{df} is strictly negative for all but finitely many $t$ along $(0,t_\varepsilon)$, so that 
     \[\theta_{M'}(t_\varepsilon)-\theta_{m'}(t_\varepsilon)\leq  {\mathcal D}(\Theta^{in}). \]
     However, we also have a lower bound of maximal difference at time $t_\varepsilon$ from \eqref{set}:
     \[\theta_{M'}(t_\varepsilon)-\theta_{m'}(t_\varepsilon)\geq   {\mathcal D}(\Theta^{in}) +\varepsilon, \]
     which gives a contradiction. Therefore, we deduce the emptiness of the set $\mathcal{S}_\varepsilon$ for every small $\varepsilon$ and conclude our desired result.
\end{proof}
Next, we show the emergence of phase convergence for the homogeneous ensemble in two cases. 
\begin{itemize}
\item
Case A:~(Emergence of complete phase synchronization): We use Lemma \ref{L3.1} and extremal phase argument to show that if $\mathcal{D}(\Theta^{in})<\pi$, there exists a positive constant $\tilde \Lambda$ and a limit phase $\theta^\infty$ such that 
\[ \|\theta_\alpha(t)-\theta^\infty\|_{\infty} \lesssim e^{-{\tilde \Lambda} t} \quad \mbox{as $t \to \infty$},\quad \alpha=1,\cdots,N. \]

\vspace{0.2cm}
\item
Case B:~(Convergence of $\theta_\alpha$): We use the result of Proposition \ref{P2.4} and perturbative result of the gradient flow to show that even without any condition on initial data, there exists asymptotic phase configuration $\Theta^{\infty} := (\theta_1^{\infty}, \cdots, \theta_N^{\infty})$ such that 
\[ \lim_{t \to \infty}  |\theta_\alpha(t) - \theta^{\infty}_\alpha| = 0, \quad \alpha=1,\cdots,N. \]
\end{itemize} 

\begin{theorem} \label{T3.0}
    Let $\{ (\theta_{\alpha},T_\alpha) \}_{\alpha=1}^N$ be a solution to \eqref{C-1}, where the  initial data $\{ (\theta_{\alpha}^{in},T_\alpha^{in}) \}_{\alpha=1}^N$ and system parameter $\kappa_1,\kappa_2,\psi_{\alpha \beta},\zeta_{\alpha \beta}$ satisfy
    \[\kappa_1,\kappa_2>0,\quad \psi_{\min}>0,\quad \zeta_{\min}>0,\quad 0<T_1^{in}\leq \cdots\leq T_N^{in}<\infty \quad \mbox{and} \quad {\mathcal D}(\Theta^{in}) <\pi. \]
    Then, one has an exponential synchronization: there exists a constant $\theta^\infty$ satisfying  
    \begin{equation}\label{rate} 
        |\theta_\alpha(t)-\theta^\infty| \lesssim e^{-{\tilde \Lambda} t}  ~ \mbox{as $t \to \infty$}, \quad \forall~{\tilde \Lambda} <\frac{\kappa_1\psi_{\min}}{T^\infty},~ \alpha=1,\cdots,N.
    \end{equation} 
\end{theorem}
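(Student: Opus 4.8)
\smallskip
\noindent\emph{Proof strategy.}
The plan is to derive a closed scalar differential inequality for the phase diameter $D(t):=\mathcal{D}(\Theta(t))$ whose coefficient is pushed to $\kappa_1\psi_{\min}/T^\infty$ by the temperature relaxation, and then integrate it. The inputs I would use are: Lemma \ref{L3.1}, which keeps $D(t)<\pi$ for all $t$; and Lemma \ref{L2.1} together with Proposition \ref{P2.4} (applicable since the $T$-dynamics is autonomous), which give $0<T_1^{in}\le T_\alpha(t)\le T_N^{in}$, the monotone decay $\max_\alpha T_\alpha(t)\searrow T^\infty$, and hence both $T^\infty\le\max_\alpha T_\alpha(t)$ and $\lim_{t\to\infty}\max_\alpha T_\alpha(t)=T^\infty$. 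Exactly as in the proofs of Lemmas \ref{L2.1} and \ref{L3.1}, I would work on a finite partition $0=\tau_0<\tau_1<\cdots$ on which the extremal phase indices $M',m'$ (with $\theta_{M'}=\max_\alpha\theta_\alpha$, $\theta_{m'}=\min_\alpha\theta_\alpha$) are constant, so that $D$ is piecewise $C^1$ and it suffices to bound $\dot D=\dot\theta_{M'}-\dot\theta_{m'}$ on each subinterval.

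The core step is the inequality
\[ \dot D(t)\le -\frac{\kappa_1\psi_{\min}}{\max_\alpha T_\alpha(t)}\,\sin D(t). \]
To obtain it, I would write $\dot D$ out from \eqref{C-1}, note that $\theta_{M'}-\theta_\beta$ and $\theta_\beta-\theta_{m'}$ both lie in $[0,D]$ with $D<\pi$, and use concavity of $\sin$ on $[0,\pi]$ in the chord form $\sin x\ge\frac{\sin D}{D}x$ for $x\in[0,D]$, together with $\psi_{M'\beta},\psi_{m'\beta}\ge\psi_{\min}$, to get
\[ \dot D\le -\frac{\kappa_1\psi_{\min}\sin D}{N D}\left(\frac{1}{T_{M'}}\sum_\beta(\theta_{M'}-\theta_\beta)+\frac{1}{T_{m'}}\sum_\beta(\theta_\beta-\theta_{m'})\right). \]
Since $T_{M'},T_{m'}\le\max_\alpha T_\alpha$ and both inner sums are nonnegative, the parenthesis is at least $\frac{1}{\max_\alpha T_\alpha}\sum_\beta[(\theta_{M'}-\theta_\beta)+(\theta_\beta-\theta_{m'})]=\frac{ND}{\max_\alpha T_\alpha}$, which cancels $ND$ and yields the displayed inequality. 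A first consequence is $D(t)\to 0$: if $D$ stayed $\ge\delta>0$, then $\sin D(t)$ would be bounded below on the compact set $[\delta,D^{in}]\subset(0,\pi)$ while $\max_\alpha T_\alpha(t)\le T_N^{in}$, forcing $\dot D$ below a negative constant, hence $D\to-\infty$, a contradiction.

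To extract the rate, given $\tilde\Lambda<\kappa_1\psi_{\min}/T^\infty$ I would fix $\e>0$ with $\tilde\Lambda<\kappa_1\psi_{\min}(1-\e)/(T^\infty+\e)$; since $\max_\alpha T_\alpha(t)\to T^\infty$ and $D(t)\to 0$, there is $t_*$ after which $\max_\alpha T_\alpha(t)<T^\infty+\e$ and $\sin D(t)\ge(1-\e)D(t)$, whence $\dot D\le-\tilde\Lambda D$ on $[t_*,\infty)$ and Gr\"onwall gives $D(t)\lesssim e^{-\tilde\Lambda t}$. Finally, to pass from the diameter to the individual phases I would use $|\dot\theta_\alpha(t)|\le\frac{\kappa_1(\max_{\alpha,\beta}\psi_{\alpha\beta})}{T_1^{in}}D(t)\lesssim e^{-\tilde\Lambda t}$, which is integrable on $[0,\infty)$, so each $\theta_\alpha(t)$ converges to some $\theta_\alpha^\infty$; the limits coincide because $D(t)\to 0$, giving a common $\theta^\infty$, and the tail bound $|\theta_\alpha(t)-\theta^\infty|\le\int_t^\infty|\dot\theta_\alpha(s)|\,ds\lesssim e^{-\tilde\Lambda t}$ gives \eqref{rate}.

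I expect the main obstacle to be producing the \emph{sharp} coefficient $\kappa_1\psi_{\min}/T^\infty$: the crude bound that retains only the $\beta=m'$ term of the first sum and the $\beta=M'$ term of the second loses a factor $1/N$, so one must keep both extremal sums and combine them via the telescoping identity $\sum_\beta[(\theta_{M'}-\theta_\beta)+(\theta_\beta-\theta_{m'})]=ND$ and the crude bounds $T_{M'},T_{m'}\le\max_\alpha T_\alpha$; and because $\max_\alpha T_\alpha(t)$ only tends to $T^\infty$ rather than equalling it, the rate can only be asserted in the asymptotic, ``$\forall\,\tilde\Lambda<\kappa_1\psi_{\min}/T^\infty$ as $t\to\infty$'' sense, which is precisely the form of the statement. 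The remaining bookkeeping — piecewise differentiability of the extrema — is routine and parallels Lemmas \ref{L2.1}--\ref{L3.1}.
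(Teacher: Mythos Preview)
Your proposal is correct and follows essentially the same strategy as the paper: bound $\dot D$ by $-\frac{\kappa_1\psi_{\min}}{T}\sin D$ via the extremal-index decomposition, integrate, then control $|\dot\theta_\alpha|$ to get convergence to a common limit. The only cosmetic differences are (i) you combine the $N$ sine terms via the chord inequality $\sin x\ge\frac{\sin D}{D}x$ plus telescoping, whereas the paper uses the equivalent concavity inequality $\sin a+\sin b\ge\sin(a+b)$ term by term; and (ii) you carry the time-dependent $\max_\alpha T_\alpha(t)$ and linearize $\sin D$ after first showing $D\to 0$, whereas the paper uses the constant bound $T_N^{in}$, integrates via $\tan(D/2)$, and then shifts the initial time to upgrade the rate to any $\tilde\Lambda<\kappa_1\psi_{\min}/T^\infty$ --- both routes yield the same asymptotic rate.
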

\begin{proof}
   Similar to Lemma \ref{L3.1}, there exists a time sequence $\{t_n \}_{n\geq 0}$ such that 
    \[0=t_0<t_1<t_2<\cdots, \]
    and unique maximal and minimal indices $M'$ and $m'$ for $\theta$ can be defined on each time interval $(\tau_{i-1},\tau_i)$. Then, on each time interval $(\tau_{i-1},\tau_i)$, we obtain:
    \[\begin{aligned}
    \frac{d}{dt}(\theta_{M'}-\theta_{m'})&=\frac{\kappa_1}{N}\sum_{\beta}\frac{\psi_{M' \beta}}{T_{M'}}\sin(\theta_{\beta}-\theta_{M'})-\frac{\kappa_1}{N}\sum_{\beta}\frac{\psi_{m' \beta}}{T_{m'}}\sin(\theta_{\beta}-\theta_{m'})\\
    &\leq \frac{\kappa_1}{N}\sum_{\beta}\frac{\psi_{M' \beta}}{T_N^{in}}\sin(\theta_{\beta}-\theta_{M'})-\frac{\kappa_1}{N}\sum_{\beta}\frac{\psi_{m' \beta}}{T_N^{in}}\sin(\theta_{\beta}-\theta_{m'})\\
    &\leq -\frac{\kappa_1\psi_{\min}}{NT_{N}^{in}}\sum_{\beta}\left[\sin(\theta_{M'}-\theta_{\beta})+\sin(\theta_{\beta}-\theta_{m'})\right]\\
    &\leq -\frac{\kappa_1\psi_{\min}}{T_{N}^{in}}\sin(\theta_{M'}-\theta_{m'})=:-{\tilde \Lambda} \sin(\theta_{M'}-\theta_{m'}),
    \end{aligned}\]
    where we used Lemma \ref{L3.1} and $0\leq \theta_{M'}-\theta_{\beta},~\theta_{\beta}-\theta_{m'}<\pi$ in the last inequality. Therefore, we have 
    \[{(\theta_{M'}-\theta_{m'})(t)}\leq 2\tan \frac{(\theta_{M'}-\theta_{m'})(t)}{2}\leq 2\tan \frac{(\theta_{M'}-\theta_{m'})(0)}{2}e^{-{\tilde \Lambda} t}.  \]
    Moreover, we can also obtain the convergence of each $\theta_\alpha$ to their limit $\theta_\alpha^\infty$ by estimating the exponential decay of derivatives: since $\dot \theta_\alpha$ decays exponentially,
    \[
    |\dot\theta_{\alpha}| =\left|\frac{\kappa_1}{N}\sum_{\beta}\frac{\psi_{\alpha \beta}}{T_{\alpha}}\sin(\theta_{\beta}-\theta_{\alpha})\right| \leq \frac{\kappa_1\psi_{\max}}{T_1^{in}}\sin {\mathcal D}(\Theta(t)) \lesssim e^{-{\tilde \Lambda} t},
    \]    
    we have 
    \[|\theta_{\alpha}^\infty-\theta_\alpha(t)|\leq \int_{t}^{\infty}|\dot\theta_{\alpha}(s)|\lesssim e^{-{\tilde \Lambda} t},\quad \alpha=1,\cdots,N. \]
    Then, as the phase diameter $\mathcal{D}(\Theta)$ converges to zero, these all $\theta_\alpha^\infty$s are indeed equal. Finally, we use the convergence of $T_\alpha$ to $T^\infty$ and change the starting time if necessary to generalize decay rate $\tilde\Lambda$ to any constant smaller than $\frac{\kappa_1\psi_{\min}}{T^\infty}$.
\end{proof}
\begin{remark}
For the Kuramoto model, the complete synchronization of  identical oscillators (i.e., all phases converge to a common constant) and their asymptotic limits can be immediately followed from the vanishing of phase diameter and the conservation of total phase. However, we here provided the existence of an asymptotic limit $\theta_\alpha^\infty$ by estimating the exponential decay rates of the derivatives $\dot\theta_{\alpha}$ directly. Indeed, one can also show the convergence of $\theta_{\alpha}$    using the exponential convergence of total sum $\sum_{\alpha} \theta_\alpha$ (see Corollary \ref{C2.1}) and the asymptotic vanishing property of phase diameter, but then the decay rates of total phase and phase diameter are different. 
\end{remark}

Next, we reformulate the TK model as an exponential perturbation of a gradient flow and then use the perturbation theory of a gradient flow to show that the convergence of phase-temperature state $(\theta_\alpha, T_\alpha)$ for every initial data.  We first recall a preparatory lemma for the perturbation of a gradient flow system without proof.
\begin{lemma}\label{L3.2}
\emph{\cite{H-J-K-P-Z}}
    Let $X=X(t) \in \bbr^d$ be a solution to a gradient flow-like system
    \[\dot X(t)=-\nabla_{X}V(X)+ F(t),  \]
    where $V$ is a potential function and the forcing function $F$ is assumed to decay to zero exponentially fast. Moreover, suppose that there exists a compact set $\mathcal{K} \subset \bbr^d$ such that 
    \begin{enumerate}
        \item The flow $\{X(t) \}_{t\geq 0}$ is contained in some compact set $\mathcal{K}$.
        \item $V$ is analytic on $\mathcal{K}$.
        \item $|\nabla_{X}V|^2$ is uniformly continuous in time.
        \end{enumerate}
    Then, there exists $X^\infty\in\mathcal{K}$ such that $\lim_{t\to\infty}X(t)=X^\infty.$
\end{lemma}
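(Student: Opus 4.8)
\emph{Proof proposal.} The lemma is the classical convergence-to-equilibrium statement for an analytic gradient flow perturbed by an exponentially small forcing, and the plan is to run a \L ojasiewicz--Simon argument. First I would test the equation against $\dot X$ to obtain the energy identity $\frac{d}{dt}V(X(t)) = -|\nabla_X V(X)|^2 + \nabla_X V(X)\cdot F(t)$ and absorb the forcing by Young's inequality, so that $\frac{d}{dt}\bigl(V(X(t)) - \tfrac12\int_0^t|F(s)|^2\,ds\bigr)\le -\tfrac12|\nabla_X V(X(t))|^2\le 0$. Since $V$ is bounded on the compact set $\mathcal K$ and $\int_0^\infty|F(s)|^2\,ds<\infty$ by the exponential decay of $F$, the bracketed quantity is non-increasing and bounded below, hence convergent; this yields simultaneously $V(X(t))\to V^*$ for some constant $V^*$ and $\int_0^\infty|\nabla_X V(X(t))|^2\,dt<\infty$.

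Next I would identify the $\omega$-limit set. Hypothesis (1) makes the trajectory precompact, so its $\omega$-limit set $\omega$ is nonempty and compact with $\operatorname{dist}(X(t),\omega)\to 0$; Barbalat's lemma applied to $|\nabla_X V(X(t))|^2$ — this is exactly where hypothesis (3) is used — gives $\nabla_X V(X(t))\to 0$, so every point of $\omega$ is a critical point of $V$ at level $V^*$. Using analyticity of $V$ on a neighborhood of $\mathcal K$, the \L ojasiewicz gradient inequality holds near each $a\in\omega$; covering the compact set $\omega$ by finitely many such neighborhoods and using that $|V-V^*|$ is small near $\omega$, I would extract uniform constants $\theta\in(0,\tfrac12]$, $C>0$, $\delta>0$ with $|V(x)-V^*|^{1-\theta}\le C|\nabla_X V(x)|$ whenever $\operatorname{dist}(x,\omega)<\delta$.

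Finally, fix $T_0$ with $\operatorname{dist}(X(t),\omega)<\delta$ for $t\ge T_0$ and set $g(t):=V(X(t))-V^*\to 0$. On each subinterval of $(T_0,\infty)$ on which $g$ has a fixed sign, the \L ojasiewicz bound together with the energy identity gives $-\frac{d}{dt}|g(t)|^{\theta}\ge \frac{\theta}{C}\bigl(|\nabla_X V(X(t))|-|F(t)|\bigr)$, while on the set where $g\equiv 0$ the energy identity forces $|\nabla_X V(X(t))|\le|F(t)|$. Summing over the countably many components, using that $g$ vanishes at their interior endpoints, and $\int_0^\infty|F(t)|\,dt<\infty$, one gets $\int_0^\infty|\nabla_X V(X(t))|\,dt<\infty$, hence $\int_0^\infty|\dot X(t)|\,dt\le \int_0^\infty|\nabla_X V(X(t))|\,dt+\int_0^\infty|F(t)|\,dt<\infty$. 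Thus $X(t)$ is Cauchy and converges to some $X^\infty\in\mathcal K$ (indeed $X^\infty\in\omega$).

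The only non-routine ingredient is the \L ojasiewicz gradient inequality — analyticity of $V$ is used precisely and solely there — together with the mild bookkeeping needed to upgrade its pointwise form to a single inequality on a full neighborhood of the limit set and to dispose of the degenerate sign cases in the last step; Young's inequality, Barbalat's lemma, and the Cauchy criterion are elementary. For the TK application one then checks that the temperature-induced forcing in $\eqref{B-8}_1$ decays exponentially by Proposition \ref{P2.4}, that the phases stay in a compact set, and that the relevant $V$ is the analytic Kuramoto potential, so that all three hypotheses hold.
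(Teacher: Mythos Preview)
Your argument is the standard \L ojasiewicz--Simon approach for exponentially perturbed analytic gradient flows, and it is correct in outline. The paper itself does not prove Lemma~\ref{L3.2} at all: its proof reads ``We refer to \cite{H-J-K-P-Z} for a proof.'' So you have supplied what the paper omits, and the route you chose --- energy dissipation plus Young, Barbalat to identify the $\omega$-limit set as critical at a single level, uniform \L ojasiewicz inequality near $\omega$, then the $|g|^{\theta}$ differential inequality to get $\int_0^\infty|\dot X|\,dt<\infty$ --- is precisely the machinery used in the cited reference and in the broader literature on perturbed gradient systems.

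One point worth flagging: the ``mild bookkeeping'' you mention for the sign of $g(t)=V(X(t))-V^*$ is slightly more delicate than your sketch suggests. On intervals where $g<0$ the inequality $-\frac{d}{dt}|g|^{\theta}\ge\frac{\theta}{C}(|\nabla V|-|F|)$ does not follow directly from the energy identity, since the sign of $\dot g$ now works against you. The usual fixes are either to replace $g$ by a shifted energy such as $g(t)+C'\int_t^\infty|F(s)|\,ds$, which one shows is eventually nonnegative and nonincreasing, or to integrate the differential inequality only on the set $\{g>0\}$ and control the complement separately using $|g|^{1-\theta}\le C|\nabla V|$ and the $L^2$-bound on $\nabla V$. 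Either way the conclusion $\int_0^\infty|\nabla V(X(t))|\,dt<\infty$ survives, but the argument is not quite as symmetric in the sign of $g$ as your last paragraph implies.
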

\begin{proof}
We refer to \cite{H-J-K-P-Z} for a proof.
\end{proof}
\begin{remark}
    If the potential $V$ is periodic, we consider $\mathbb{T}^N$ as the ambient manifold, and then the condition (1) in Lemma \ref{L3.2} immediately satisfied.
\end{remark}
Next, we show the existence of constant asymptotic states for the phase and temperature in the following theorem.
\begin{theorem}\label{T3.1}
Let $\{ (\theta_{\alpha},T_\alpha) \}_{\alpha=1}^N$ be a solution to \eqref{C-1} with initial data $\{ (\theta_{\alpha}^{in},T_\alpha^{in}) \}_{\alpha=1}^N$, and
suppose that the coupling strengths, communication weights  and the initial data satisfy 
\[ \kappa_1,~\kappa_2 > 0,\quad \psi_{\min}>0,\quad \zeta_{\min}>0, \quad 0<T_1^{in}\leq \cdots\leq T_N^{in}<\infty.  \] Then, there exists a constant phase-temperature state $(\Theta^\infty,T^\infty)=(\theta_1^\infty,\cdots,\theta_N^\infty,T^\infty)$ such that 
    \[\lim_{t\to\infty}\theta_\alpha(t)=\theta_\alpha^\infty,\quad  \lim_{t\to\infty}T_\alpha(t)=T^\infty,\quad \forall \alpha=1,\cdots,N, \]
    where the asymptotic states $\{ (\theta_\alpha^{\infty}, T_\alpha^{\infty}) \}$ satisfy 
    \begin{equation} \label{rel}
    \sum_{\beta} \psi_{\alpha \beta}\sin(\theta_\beta^\infty-\theta_\alpha^\infty)=0,\quad  T^\infty+\frac{\eta^2}{2T_*^2}{T^\infty}^2=\frac{1}{N}\sum_{\beta} \left({T^{in}_\beta}+\frac{\eta^2}{2T_*^2}{T^{in}_\beta}^2\right),~ \forall~\alpha \in \{1, \cdots, N \}. 
\end{equation}
\end{theorem}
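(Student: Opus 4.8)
The plan is to treat the temperature and phase convergences separately and to view the phase equation $\eqref{C-1}_1$ as an exponentially small perturbation of the Kuramoto gradient flow in which the temperature has been frozen at its limiting value $T^\infty$. The temperature part is already available: by Proposition \ref{P2.4} there is a positive $T^\infty$ determined by \eqref{B-10} with $\max_\alpha|T_\alpha(t)-T^\infty|\lesssim e^{-\Lambda t}$ for a suitable $\Lambda>0$, and combining this with the uniform two-sided bounds $0<T_1^{in}\le T_\alpha(t)\le T_N^{in}$ from Lemma \ref{L2.1} one obtains $\big|\tfrac{1}{T_\alpha(t)}-\tfrac{1}{T^\infty}\big|=\tfrac{|T^\infty-T_\alpha(t)|}{T_\alpha(t)T^\infty}\lesssim e^{-\Lambda t}$, which will feed into the forcing term below.

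Next I would introduce the real-analytic, $2\pi$-periodic potential
\[ V(\Theta):=-\frac{\kappa_1}{2NT^\infty}\sum_{\alpha,\beta}\psi_{\alpha\beta}\cos(\theta_\beta-\theta_\alpha), \]
which descends to the torus $\bbt^N$, and, using the symmetry $\psi_{\alpha\beta}=\psi_{\beta\alpha}$, check the identity $-\partial_{\theta_\alpha}V(\Theta)=\tfrac{\kappa_1}{NT^\infty}\sum_\beta\psi_{\alpha\beta}\sin(\theta_\beta-\theta_\alpha)$. Then $\eqref{C-1}_1$ rewrites as a perturbed gradient flow
\[ \dot\theta_\alpha=-\partial_{\theta_\alpha}V(\Theta)+F_\alpha(t),\qquad F_\alpha(t):=\frac{\kappa_1}{N}\sum_\beta\psi_{\alpha\beta}\Big(\frac{1}{T_\alpha(t)}-\frac{1}{T^\infty}\Big)\sin(\theta_\beta-\theta_\alpha), \]
and by the previous paragraph, together with $|\sin|\le 1$, the forcing $F=(F_1,\dots,F_N)$ decays to zero exponentially fast.

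I would then invoke Lemma \ref{L3.2} on $\bbt^N$: hypothesis (1) holds automatically since $V$ is periodic (remark following Lemma \ref{L3.2}); (2) $V$ is analytic on $\bbt^N$; and (3) $|\nabla_\Theta V(\Theta(t))|^2$ is uniformly continuous in $t$, because the a priori bound $|\dot\theta_\alpha(t)|\le \kappa_1\psi_{\max}/T_1^{in}$ makes $t\mapsto\Theta(t)$ Lipschitz while $\nabla_\Theta V$ is smooth on the compact torus. Lemma \ref{L3.2} then produces $\Theta^\infty\in\bbt^N$ with $\Theta(t)\to\Theta^\infty$, which, combined with Proposition \ref{P2.4}, yields the asserted convergence $\theta_\alpha(t)\to\theta_\alpha^\infty$, $T_\alpha(t)\to T^\infty$. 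For the relations \eqref{rel}, the temperature identity is exactly \eqref{B-10}, and the phase identity follows by passing to the limit $t\to\infty$ in $\eqref{C-1}_1$: the right-hand side tends to $\tfrac{\kappa_1}{NT^\infty}\sum_\beta\psi_{\alpha\beta}\sin(\theta_\beta^\infty-\theta_\alpha^\infty)$, and since $\theta_\alpha(t)$ converges while $\dot\theta_\alpha(t)$ also converges, the limit of $\dot\theta_\alpha$ must vanish, giving $\sum_\beta\psi_{\alpha\beta}\sin(\theta_\beta^\infty-\theta_\alpha^\infty)=0$ for every $\alpha$.

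The step I expect to be the main obstacle is verifying that the reformulation genuinely fits the hypotheses of Lemma \ref{L3.2}: producing the exponential decay of the forcing $F$ rests on both the uniform positive lower bound for $T_\alpha$ (Lemma \ref{L2.1}) and the exponential temperature relaxation (Proposition \ref{P2.4}), and one must also confirm carefully that $|\nabla_\Theta V|^2$ is uniformly continuous in time; the rest — the gradient identity, the uniform bound on $\dot\theta_\alpha$, and the limit passage in \eqref{rel} — is routine.
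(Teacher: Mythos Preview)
Your proposal is correct and follows essentially the same route as the paper: rewrite $\eqref{C-1}_1$ as a gradient flow for the analytic periodic potential $V(\Theta)=\frac{\kappa_1}{2NT^\infty}\sum_{\alpha,\beta}\psi_{\alpha\beta}(1-\cos(\theta_\alpha-\theta_\beta))$ perturbed by an exponentially decaying forcing coming from $\frac{1}{T_\alpha}-\frac{1}{T^\infty}$, then apply Lemma~\ref{L3.2} on $\bbt^N$ and pass to the limit in $\dot\theta_\alpha$. The only cosmetic difference is that the paper verifies hypothesis~(3) by directly bounding $\tfrac{d}{dt}|\partial_{\theta_\alpha}V|^2$, whereas you argue via the Lipschitz continuity of $t\mapsto\Theta(t)$ composed with the smooth map $|\nabla_\Theta V|^2$; both are valid.
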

\begin{proof}
    We first rewrite $\eqref{C-1}_1$ to gradient-like flow structure in Lemma \ref{L3.2}. For a phase configuration $\Theta=(\theta_1,\cdots,\theta_N)\in\mathbb{T}^N$, let $V:\mathbb{T}^N\to\mathbb{R}$ be an analytic potential function:  
    \[V(\Theta) :=\frac{\kappa_1}{2NT^\infty}\sum_{\alpha, \beta}\psi_{\alpha \beta} \left(1-\cos(\theta_{\alpha}-\theta_\beta )\right), \] 
    where the constant $T^\infty$ is given by Proposition \ref{P2.4}. \newline
    
    \noindent Note that equation \eqref{C-1} can be written as 
    \begin{equation}\label{fluct}
    \dot\Theta=-\nabla_{\Theta}V(\Theta)+F(t), \quad t > 0, 
    \end{equation}
    where the forcing function $F = (f_1, \cdots, f_N)$ is explicitly given as follows:
\begin{equation} \label{C-1-1}
 f_\alpha(t)=\frac{\kappa_1}{N}\sum_{\beta} \frac{T^\infty-T_\alpha(t)}{T_\alpha(t) T^\infty}\psi_{\alpha \beta}\sin(\theta_{\beta}(t)-\theta_{\alpha}(t)). \end{equation}
 On the other hand, the forcing function $f_\alpha$ decays to zero due to Proposition \ref{P2.4}: 
    \[|f_\alpha(t)|\leq \frac{\kappa_1}{N}\sum_{\beta}\frac{|T^\infty-T_\alpha(t)|}{T_\alpha(t)T^\infty}\psi_{\alpha\beta}=\mathcal{O}(e^{-\Lambda t}),\quad \Lambda < \frac{\kappa_2\zeta_{\min}T_*^2}{{{T^\infty}^2}(T_*^2+\eta^2{T^\infty})}.  \]
    Therefore, it suffices to verify the condition $(3)$ in Lemma \ref{L3.2}. To prove the uniform continuity of $|\nabla_{\Theta}V|^2$ in $t$, we find a finite uniform bound of its $t$-derivative.
    More precisely, the $t$-derivative can be estimated by substituting $\dot\theta_\alpha$ in \eqref{C-1} into $\frac{d}{dt}|\partial_{\theta_\alpha}V|^2$:
    \[\begin{aligned}
    \left|\frac{d}{dt}|\partial_{\theta_\alpha}V|^2\right|&=\left|\frac{\kappa_1^2}{N^2{T^\infty}^2}\frac{d}{dt}\left(\sum_{\beta}\psi_{\alpha\beta}\sin(\theta_{\beta}-\theta_{\alpha} )\right)^2\right|\\
    &=\frac{2\kappa_1^2}{N^2{T^\infty}^2}\left|\sum_{\beta}\psi_{\alpha\beta}\sin(\theta_{\beta}-\theta_{\alpha} )\right|\left|\sum_{\beta}\psi_{\alpha\beta}\cos(\theta_{\beta}-\theta_{\alpha} )(\dot\theta_{\beta}-\dot\theta_{\alpha})\right|\\
    &\leq \frac{2\kappa_1^2\psi_{\max}^2}{N{T^\infty}^2}\sum_{\beta}\left|\dot\theta_{\beta}-\dot\theta_{\alpha} \right| \leq \frac{2\kappa_1^2\psi_{\max}^2}{N{T^\infty}^2}\cdot 2N\cdot\frac{\kappa_1\psi_{\max}}{T_1^{in}}<\infty.
    \end{aligned} \]
    Now, the convergence of the solution $(\theta_{\alpha},T_\alpha)$ to a constant $(\theta_\alpha^\infty,T^\infty)$ is provided as a consequence of Lemma \ref{L3.2}, and as $f(t)$ and $\nabla_{\Theta}V(\Theta(t))$ converges, we also obtain the convergence of $t$-derivative $\dot{\Theta}(t)$. Since the only possible limit for $\dot\Theta$ compatible with the convergence of $\Theta$ is ${\bf 0}$, we have 
    \[\nabla_{\Theta}V(\Theta^\infty)=\lim_{t\to\infty}\dot\Theta(t)=0. \]
This implies the relation \eqref{rel} for the limit $(\Theta^\infty,T^\infty)$.
 \end{proof}

Although we can derive the existence of limit state $(\theta_\alpha^\infty,T^\infty)$ for homogeneous ensemble, it is still not clear from \eqref{rel} whether the phases $\theta_\alpha$ indeed exhibits a complete phase synchronization $\theta_1^\infty=\cdots=\theta_N^\infty$ or not.  In \cite{H-K-R}, authors provided that for the original Kuramoto model $(\psi_{\alpha\beta}\equiv 1,~ T_\alpha\equiv T_*)$, the limit phases $\Theta^\infty=(\theta_1^\infty,\cdots,\theta_N^\infty)$ can be written as 
\[\Theta^\infty=~ \text{either}\quad (\phi^\infty,\cdots,\phi^\infty)\quad \text{or}\quad (\phi^\infty,\cdots,\phi^\infty, \phi^\infty+\pi), \]
whenever the initial configuration $\Theta^{in}$ satisfies 
\[\theta_\alpha^{in}\neq \theta_\beta^{in}~\text{for}~\alpha\neq \beta,\quad R^{in}=\left|\frac{1}{N}\sum_{\alpha} e^{i\theta_\alpha^{in}} \right|>0. \]

We will now introduce an analogous result for the homogeneous TK model \eqref{C-1} with the following preparatory lemma.

\begin{lemma}\label{L3.4}
    Let $\{ (\theta_{\alpha},T_\alpha) \}_{\alpha=1}^N$ be a solution to \eqref{C-1} with the initial data $\{(\theta_{\alpha}^{in},T_\alpha^{in})\}_{\alpha=1}^N$ satisfying  
    \[\kappa_1, \kappa_2>0,\quad \psi_{\min}\geq0,\quad 0<T_1^{in}\leq \cdots\leq T_N^{in}<\infty. \]
    Then, we have
    \[
    \frac{d}{dt}\left(\sum_{\alpha, \beta}\psi_{\alpha\beta}\cos(\theta_\alpha-\theta_\beta)\right)\geq0.
    \]
\end{lemma}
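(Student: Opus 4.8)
The plan is to differentiate the functional $Q(t):=\sum_{\alpha,\beta}\psi_{\alpha\beta}\cos(\theta_\alpha-\theta_\beta)$ directly. Since the right-hand side of \eqref{C-1} is analytic in $(\Theta,T)$ as long as the temperatures remain positive, and Lemma \ref{L2.1} guarantees $T_\alpha(t)\geq T_1^{in}>0$ for all $t$, the solution is smooth in $t$ and $Q$ is a genuinely differentiable function of $t$; in particular, no extremal-index bookkeeping (as in the proof of Lemma \ref{L3.1}) is needed here.

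First I would compute
\[ \dot{Q}=-\sum_{\alpha,\beta}\psi_{\alpha\beta}\sin(\theta_\alpha-\theta_\beta)(\dot\theta_\alpha-\dot\theta_\beta). \]
Then, using the symmetry $\psi_{\alpha\beta}=\psi_{\beta\alpha}$ from \eqref{B-9} together with the antisymmetry of $\sin(\theta_\alpha-\theta_\beta)$ under the swap $\alpha\leftrightarrow\beta$, the term carrying $\dot\theta_\beta$ is, after relabelling, equal to the term carrying $\dot\theta_\alpha$, so the double sum folds into
\[ \dot{Q}=-2\sum_{\alpha}a_\alpha\,\dot\theta_\alpha,\qquad a_\alpha:=\sum_{\beta}\psi_{\alpha\beta}\sin(\theta_\alpha-\theta_\beta). \]

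The key observation is that the phase equation $\eqref{C-1}_1$ says precisely $\dot\theta_\alpha=-\frac{\kappa_1}{NT_\alpha}a_\alpha$. Substituting this in gives
\[ \dot{Q}=\frac{2\kappa_1}{N}\sum_{\alpha}\frac{a_\alpha^{2}}{T_\alpha}\geq 0, \]
where nonnegativity is immediate from $\kappa_1>0$ and $T_\alpha(t)>0$ by Lemma \ref{L2.1}. This completes the proof.

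There is no genuine obstacle here; the only steps needing care are the antisymmetrization (correctly producing the factor $2$ and the single sum over $\alpha$) and the appeal to positivity of the temperatures. Note that the hypothesis $\psi_{\min}\geq 0$ is in fact not even used in the final inequality, since $a_\alpha^2/T_\alpha\geq 0$ irrespective of the signs of the $\psi_{\alpha\beta}$. For context, when $\psi_{\alpha\beta}\equiv 1$ and $T_\alpha\equiv T_*$ the quantity $Q$ equals $N^2$ times the squared order parameter $\big|\frac1N\sum_\alpha e^{\mathrm i\theta_\alpha}\big|^2$, and the lemma recovers its classical monotonicity along the Kuramoto flow; the computation above shows this order-parameter-type monotonicity persists under the temperature coupling of the TK model.
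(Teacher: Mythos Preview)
Your proof is correct and follows essentially the same route as the paper's: differentiate, use the symmetry $\psi_{\alpha\beta}=\psi_{\beta\alpha}$ to collapse the double sum to $-2\sum_\alpha a_\alpha\dot\theta_\alpha$, substitute $\dot\theta_\alpha=-\frac{\kappa_1}{NT_\alpha}a_\alpha$, and recognize the resulting sum of squares. Your remark that the sign hypothesis on $\psi_{\alpha\beta}$ is not actually used in the inequality is a valid observation.
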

\begin{proof} By direct calculation, the above target functional can be rewritten as a sum of squares as below:
    \begin{align*}
    \begin{aligned}
    &\frac{d}{dt}\left(\sum_{\alpha, \beta}\psi_{\alpha\beta}\cos(\theta_\alpha-\theta_\beta)\right)=-\sum_{\alpha, \beta}\psi_{\alpha\beta}\sin(\theta_\alpha-\theta_\beta)(\dot{\theta}_\alpha-\dot{\theta}_\beta)\\
    & \hspace{0.5cm} =-2\sum_{\alpha,\beta}\psi_{\alpha\beta}\sin(\theta_\alpha-\theta_\beta)\dot{\theta}_\alpha=-\frac{2\kappa_1}{N}\sum_{\alpha,\beta,\gamma}\frac{1}{T_\alpha}\psi_{\alpha\beta}\psi_{\alpha\gamma}\sin(\theta_\alpha-\theta_\beta)\sin(\theta_\gamma-\theta_\alpha)\\
    &\hspace{0.5cm} =\frac{2\kappa_1}{N}\sum_{\alpha}\frac{1}{T_\alpha}\left(\sum_\beta \psi_{\alpha\beta}\sin(\theta_\alpha-\theta_\beta)\right)^2\geq0.
    \end{aligned}
    \end{align*}
\end{proof}

\begin{corollary}\label{bipolar}
    Let $\{ (\theta_{\alpha},T_\alpha) \}_{\alpha=1}^N$ be a solution to \eqref{C-1} with the initial data $\{(\theta_{\alpha}^{in},T_\alpha^{in})\}_{\alpha=1}^N$ satisfying  
    \[\kappa_1, \kappa_2>0,\quad  0<T_1^{in}\leq\cdots\leq T_N^{in}<\infty,\quad  R^{in}:=\left|\frac{1}{N}\sum_{\alpha} e^{{\mathrm i}\theta_\alpha^{in}} \right|>0, \quad \psi_{\alpha \beta}\equiv1\quad  \forall~ \alpha, \beta. \]
    Then, there exists a constant $\phi^\infty\in\mathbb{R}$ such that asymptotic phases $(\theta_\alpha^\infty)_\alpha$ satisfies  
    \[ \mbox{either}~~\theta_\alpha=\phi^\infty\quad \text{or}\quad \phi^\infty+\pi,\quad \alpha=1,\cdots,N. \]
\end{corollary}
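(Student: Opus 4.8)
The plan is to read the conclusion off from three ingredients that are already available: the convergence statement of Theorem \ref{T3.1}, the monotonicity of the order parameter coming from Lemma \ref{L3.4}, and an elementary rewriting of the equilibrium relation \eqref{rel} in terms of the centroid $\frac1N\sum_\alpha e^{{\mathrm i}\theta_\alpha}$.

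First I would invoke Theorem \ref{T3.1}: since $\psi_{\alpha\beta}\equiv1$ and $0<T_1^{in}\le\cdots\le T_N^{in}<\infty$, all its hypotheses hold, so there are constants $\theta_1^\infty,\dots,\theta_N^\infty$ and $T^\infty>0$ with $\theta_\alpha(t)\to\theta_\alpha^\infty$, $T_\alpha(t)\to T^\infty$, and, by \eqref{rel} specialized to $\psi_{\alpha\beta}\equiv1$,
\[ \sum_{\beta}\sin(\theta_\beta^\infty-\theta_\alpha^\infty)=0,\qquad \alpha=1,\dots,N. \]
Next I would show that the order parameter stays bounded away from zero. Writing $R(t)\,e^{{\mathrm i}\phi(t)}:=\frac1N\sum_\alpha e^{{\mathrm i}\theta_\alpha(t)}$, one has the identity $N^2R(t)^2=\sum_{\alpha,\beta}\cos(\theta_\alpha(t)-\theta_\beta(t))$, so Lemma \ref{L3.4} with $\psi_{\alpha\beta}\equiv1$ gives $\frac{d}{dt}\bigl(N^2R(t)^2\bigr)\ge0$. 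Hence $R$ is nondecreasing (being nonnegative with $R^2$ nondecreasing) and, being also bounded by $1$, it converges; in particular $R(t)\ge R^{in}>0$ for all $t$. By continuity of the centroid along the convergent phase flow, $R^\infty:=\lim_{t\to\infty}R(t)=\bigl|\tfrac1N\sum_\alpha e^{{\mathrm i}\theta_\alpha^\infty}\bigr|\ge R^{in}>0$, so one may define $\phi^\infty$ by $R^\infty e^{{\mathrm i}\phi^\infty}=\frac1N\sum_\alpha e^{{\mathrm i}\theta_\alpha^\infty}$.

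Finally I would close the argument by rewriting the equilibrium relation through the limit order parameter: for each $\alpha$,
\[ \sum_{\beta}\sin(\theta_\beta^\infty-\theta_\alpha^\infty)=\mathrm{Im}\Bigl(e^{-{\mathrm i}\theta_\alpha^\infty}\sum_{\beta}e^{{\mathrm i}\theta_\beta^\infty}\Bigr)=NR^\infty\sin(\phi^\infty-\theta_\alpha^\infty). \]
Since the left-hand side vanishes and $R^\infty>0$, this forces $\sin(\phi^\infty-\theta_\alpha^\infty)=0$ for every $\alpha$, i.e. $\theta_\alpha^\infty\equiv\phi^\infty$ or $\phi^\infty+\pi$ modulo $2\pi$, which is exactly the claimed bipolar structure.

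The only step requiring genuine care is the lower bound $R^\infty\ge R^{in}>0$; the rest is bookkeeping. That bound is precisely what Lemma \ref{L3.4} delivers once one recognizes $\sum_{\alpha,\beta}\psi_{\alpha\beta}\cos(\theta_\alpha-\theta_\beta)$ as $N^2R^2$ in the all-to-all case — the direct analogue of the classical fact that the order parameter is nondecreasing for identical Kuramoto oscillators. The temperature field enters only through the positive weights $1/T_\alpha$ in the sum-of-squares computation of Lemma \ref{L3.4}, so it does not affect the sign, and the asymptotic constancy of the phases (not just of the diameter) is supplied by Theorem \ref{T3.1}.
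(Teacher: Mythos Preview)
Your proof is correct and follows essentially the same route as the paper: both invoke Theorem \ref{T3.1} for the limit and relation \eqref{rel}, use Lemma \ref{L3.4} to get $R(t)^2$ nondecreasing and hence $R^\infty>0$, and then exploit that the equilibrium condition $\sum_\beta\sin(\theta_\beta^\infty-\theta_\alpha^\infty)=0$ forces each $e^{{\mathrm i}\theta_\alpha^\infty}$ to lie on the real axis determined by the nonzero centroid. The only cosmetic difference is in the last step, where you read off $\sin(\phi^\infty-\theta_\alpha^\infty)=0$ directly from the polar form of the centroid, while the paper instead shows $e^{{\mathrm i}(\theta_\beta^\infty-\theta_\alpha^\infty)}\in\mathbb{R}$ by writing it as a ratio of the two nonzero real sums $\sum_\gamma e^{{\mathrm i}(\theta_\gamma^\infty-\theta_\alpha^\infty)}$ and $\sum_\gamma e^{{\mathrm i}(\theta_\gamma^\infty-\theta_\beta^\infty)}$.
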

\begin{proof}
    The initial condition $R^{in}>0$ implies that 
    \[\begin{aligned}
    |R^{in}|^2=\left(\frac{1}{N}\sum_{\alpha} e^{{\mathrm i} \theta_\alpha^{in}}\right)\overline{\left(\frac{1}{N}\sum_{\alpha} e^{{\mathrm i} \theta_\alpha^{in}}\right)}
    =\frac{1}{N^2}\sum_{\alpha, \beta}e^{{\mathrm i}(\theta_\alpha^{in}-\theta_\beta^{in})}=\frac{1}{N^2}\sum_{\alpha, \beta}\cos(\theta_\alpha^{in}-\theta_\beta^{in})
    \end{aligned} \]
    is strictly positive. Therefore, as $R(\Theta)^2:=\frac{1}{N^2}\sum_{\alpha, \beta} \cos(\theta_\alpha-\theta_\beta)$ is nondecreasing from Lemma \ref{L3.4}, we can deduce that $R(\Theta (t))$ converges to a positive constant $R^\infty:=\left|\frac{1}{N}\sum_{\alpha} e^{{\mathrm i} \theta_\alpha^\infty}\right|$ smaller than 1, and in particular 
    \[\frac{1}{N}\sum_{\alpha} e^{{\mathrm i} \theta_\alpha^\infty}\neq 0. \]
    
    \noindent On the other hand, for $\psi_{\alpha \beta}\equiv 1$, the first relation in \eqref{rel} becomes 
    \[\sum_{\beta = 1}^N\sin(\theta_\beta^\infty-\theta_\alpha^\infty)=0,\quad \alpha=1,\cdots,N. \]
    Therefore, for every $\alpha,\beta=1,\cdots,N$, we have 
    \[\sum_{\gamma = 1}^Ne^{{\mathrm i}(\theta_\gamma^\infty-\theta_\alpha^\infty)}\in\mathbb{R}-\{0\},\quad  e^{{\mathrm i}(\theta_\beta^\infty-\theta_\alpha^\infty)}=\left(\sum_{\gamma = 1}^Ne^{{\mathrm i}(\theta_\gamma^\infty-\theta_\alpha^\infty)}\right){\left(\sum_{\gamma = 1}^Ne^{{\mathrm i}(\theta_\gamma^\infty-\theta_\beta^\infty)}\right)}^{-1}\in\mathbb{R}, \]
     which concludes the desired result.
    
\end{proof}

\subsection{Heterogeneous TK ensemble} In this subsection, we consider a heterogeneous ensemble of TK oscillators:
\begin{equation} \label{C-2}
\begin{cases}
\displaystyle \dot{\theta}_\alpha = \nu_\alpha+ \frac{\kappa_1}{N}\sum_{\beta} \frac{\psi_{\alpha\beta}}{T_\alpha} \sin(\theta_\beta-\theta_\alpha),~~ \alpha=1,\cdots,N,~ t>0,\\
\displaystyle \dot{T}_\alpha = \frac{\kappa_2}{N}\sum_{\beta} \frac{\zeta_{\alpha\beta}T_*^2}{(T_*^2+\eta^2 T_\alpha)}\left(\frac{1}{T_\alpha}-\frac{1}{T_\beta}\right).
\end{cases}
\end{equation}

\vspace{0.2cm}

For the emergent dynamics of \eqref{C-2}, we take the following three steps: 
\begin{itemize}
\item
Step A:~Under a priori uniform boundedness of phases, we show that the TK model can be rewritten as an exponential perturbation of a gradient flow, and then using the perturbation theory of the gradient flow in Lemma \ref{L3.2}, we show that the phase field approaches the constant phase field asymptotically (see Lemma \ref{L3.3}).

\vspace{0.2cm}

\item
Step B:~Under suitable smallness assumption on the initial phase diameter, we show that the phases are uniformly bounded in time, and asymptotically,  the phase diameter is bounded by the quantity inversely proportional to $\kappa_1^{-1}$, i.e., the formation of practical synchronization (see Proposition \ref{P3.2}).

\vspace{0.2cm}

\item
Step C:~By collecting all the estimates in Step A and Step B, we show that the phase configuration evolves toward the phase-locked state (see Theorem \ref{T3.2}).
\end{itemize}

\medskip

In the sequel, we perform the above three steps. 

\begin{lemma}\label{L3.3}
        Let $\{(\theta_{\alpha},T_\alpha) \}_{\alpha=1}^N$ be a solution to \eqref{C-2} with the initial data $\{(\theta_{\alpha}^{in},T_\alpha^{in})\}_{\alpha=1}^N$ satisfying positivity and a priori condition:
    \[\kappa_1,\kappa_2>0,\quad \psi_{\min}>0,\quad \zeta_{\min}>0,\quad \min_{\alpha}T_\alpha^{in}>0, \quad  \sup_{0 \leq t < \infty}\max_{\alpha}|\theta_{\alpha}(t)| \leq D < \infty. \]
    Then, there exists a unique asymptotic state $(\theta_\alpha^\infty,T^\infty)$:
    \[\lim_{t\to\infty}\theta_{\alpha}(t)=\theta_\alpha^\infty \quad \mbox{and} \quad \lim_{t\to\infty} T_\alpha(t)=T^\infty, \]
    where the asymptotic state $(\theta_\alpha^\infty,T^\infty)$ satisfies the following relations for all $\alpha$:
    \begin{equation}\label{rel2}
    T^\infty+\frac{\eta^2}{2T_*^2}{T^\infty}^2=\frac{1}{N}\sum_{\alpha} \left({T^{in}_\alpha}+\frac{\eta^2}{2T_*^2}{T^{in}_\alpha}^2\right), \quad  \nu_\alpha +\frac{\kappa_1}{NT^\infty}\sum_{\beta}\psi_{\alpha \beta}\sin(\theta_{\beta}^\infty-\theta_{\alpha}^\infty)=0.
    \end{equation}
\end{lemma}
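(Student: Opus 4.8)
The plan is to follow the strategy already used for Theorem \ref{T3.1}: recast the phase dynamics \eqref{C-2}$_1$ as a gradient flow perturbed by an exponentially decaying forcing term, and then apply the perturbation lemma, Lemma \ref{L3.2}. The one genuinely new feature compared with the homogeneous case is that the natural frequencies $\nu_\alpha$ introduce a non-periodic drift into the potential, so one can no longer work on the torus $\mathbb{T}^N$ and invoke the remark after Lemma \ref{L3.2}. This is exactly where the a priori bound $\sup_{t\ge 0}\max_\alpha|\theta_\alpha(t)|\le D<\infty$ is used: it confines the phase trajectory to the compact box $\mathcal{K}:=[-D,D]^N$.

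First I would let $T^\infty>0$ be the temperature limit furnished by Proposition \ref{P2.4} and set
\[
V(\Theta):=-\sum_{\alpha}\nu_\alpha\theta_\alpha+\frac{\kappa_1}{2NT^\infty}\sum_{\alpha,\beta}\psi_{\alpha\beta}\bigl(1-\cos(\theta_\alpha-\theta_\beta)\bigr),
\]
which is real analytic on $\bbr^N$. A direct differentiation, using $\psi_{\alpha\beta}=\psi_{\beta\alpha}$, gives $-\partial_{\theta_\alpha}V(\Theta)=\nu_\alpha+\frac{\kappa_1}{NT^\infty}\sum_\beta\psi_{\alpha\beta}\sin(\theta_\beta-\theta_\alpha)$, so that \eqref{C-2}$_1$ takes the form $\dot\Theta=-\nabla_\Theta V(\Theta)+F(t)$ with
\[
f_\alpha(t)=\frac{\kappa_1}{N}\sum_{\beta}\psi_{\alpha\beta}\sin(\theta_\beta-\theta_\alpha)\,\frac{T^\infty-T_\alpha(t)}{T_\alpha(t)\,T^\infty},
\]
the same expression as \eqref{C-1-1}. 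By Lemma \ref{L2.1} the temperatures stay bounded below by $\min_\alpha T_\alpha^{in}>0$, and by Proposition \ref{P2.4} $|T_\alpha(t)-T^\infty|\lesssim e^{-\Lambda t}$; hence $|f_\alpha(t)|\lesssim e^{-\Lambda t}$, i.e. the forcing decays exponentially.

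Next I would check the three hypotheses of Lemma \ref{L3.2} with $X=\Theta$, $d=N$, $\mathcal{K}=[-D,D]^N$: (1) is the a priori confinement; (2) holds since $V$ is analytic; and (3), the uniform-in-time continuity of $|\nabla_\Theta V|^2$, follows by a uniform bound on its $t$-derivative exactly as in the proof of Theorem \ref{T3.1} — differentiate $|\partial_{\theta_\alpha}V|^2$, substitute $\dot\theta_\beta-\dot\theta_\alpha$ from \eqref{C-2}, and use $|\dot\theta_\alpha|\le |\nu_\alpha|+\kappa_1\psi_{\max}/\min_\alpha T_\alpha^{in}$ together with the finiteness of $\psi_{\max}$ and of the $\nu_\alpha$. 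Lemma \ref{L3.2} then yields $\Theta(t)\to\Theta^\infty$ for some $\Theta^\infty\in\mathcal{K}$; combined with Proposition \ref{P2.4} this gives $(\theta_\alpha(t),T_\alpha(t))\to(\theta_\alpha^\infty,T^\infty)$, with the first identity of \eqref{rel2} being the conservation law \eqref{B-10}. Finally, since $F(t)\to0$ and $\nabla_\Theta V(\Theta(t))\to\nabla_\Theta V(\Theta^\infty)$, the derivative $\dot\Theta(t)$ converges; as $\Theta(t)$ itself converges, its derivative must tend to $\mathbf{0}$ (the only limit of $\dot\Theta$ compatible with convergence of $\Theta$), so $\nabla_\Theta V(\Theta^\infty)=0$, which is the second identity in \eqref{rel2}. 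Uniqueness of the asymptotic state is automatic once the limits exist.

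I expect the main obstacle to be, as in Theorem \ref{T3.1}, verifying hypothesis (3) of Lemma \ref{L3.2}: one must produce a genuinely time-uniform bound on $\frac{d}{dt}|\nabla_\Theta V|^2$, which rests on the lower temperature bound of Lemma \ref{L2.1}, the boundedness of the $\nu_\alpha$, and the confinement to $\mathcal{K}$ supplied by the hypothesis $D<\infty$; the remaining steps are routine bookkeeping.
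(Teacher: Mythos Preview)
Your proposal is correct and follows essentially the same route as the paper's own proof: define the analytic potential $\widetilde V(\Theta)=-\sum_\alpha\nu_\alpha\theta_\alpha+\frac{\kappa_1}{2NT^\infty}\sum_{\alpha,\beta}\psi_{\alpha\beta}(1-\cos(\theta_\alpha-\theta_\beta))$, write \eqref{C-2}$_1$ as $\dot\Theta=-\nabla_\Theta\widetilde V+F(t)$ with the same exponentially decaying $F$ as in \eqref{C-1-1}, invoke the a priori bound to confine the trajectory to $[-D,D]^N$ (since periodicity is unavailable), verify hypothesis (3) of Lemma \ref{L3.2} via a uniform bound on $\frac{d}{dt}|\partial_{\theta_\alpha}\widetilde V|^2$, and conclude $\nabla_\Theta\widetilde V(\Theta^\infty)=0$ from the convergence of $\dot\Theta$. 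Your identification of the key new point---that the $\nu_\alpha$-drift forces one to work on $\mathbb{R}^N$ rather than $\mathbb{T}^N$ and rely on the hypothesis $D<\infty$---matches the paper exactly.
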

\begin{proof}
    We again apply the result of Lemma \ref{L3.2} to prove the convergence result. In this case, however, we cannot employ the periodicity of the potential $V$ as in Theorem \ref{T3.1}. Therefore, we here regard phase configuration $\Theta$ as a vector in $\mathbb{R}^N$ and use the \textit{a priori} boundedness of $\Theta$ to satisfy the condition $(1)$ of Lemma \ref{L3.2}. For a phase configuration $\Theta=(\theta_1,\cdots,\theta_N)\in\mathbb{R}^N$, let $\widetilde V:\mathbb{R}^N\to\mathbb{R}$ be an analytic potential function  
    \[\widetilde V(\Theta)=-\sum_{\alpha}\nu_\alpha\theta_{\alpha}+\frac{\kappa_1}{2NT^\infty}\sum_{\alpha, \beta}\psi_{\alpha \beta} \left(1-\cos(\theta_{\alpha}-\theta_\beta )\right), \] 
    where the constant $T^\infty$ is given by Proposition \ref{P2.4}.\\
    
    \noindent Then, the equation \eqref{C-2} can also be written as a perturbation of a gradient flow:
    \begin{equation}\label{gralike3}
    \dot\Theta=-\nabla_{\Theta}\widetilde{V}(\Theta)+F(t),
    \end{equation}
    where $F = F(t)$ is the same fluctuation function used in \eqref{fluct}. Thus, we again have a gradient flow-like system \eqref{gralike3} with exponentially converging fluctuation $F$ and analytic potential $\widetilde{V}$. Now, since the solution trajectory $\{\Theta(t)\}_{t\geq 0}$ is contained in a compact set $[-D,D]^N$, the uniform boundedness of $\frac{d}{dt}|\partial_{\theta_\alpha}\widetilde{V}|^2$ will conclude our desired convergence result. We obtain the uniform boundedness of $\frac{d}{dt}|\partial_{\theta_\alpha}\widetilde{V}|^2$ by direct calculations:
    \[\begin{aligned}
    \left|\frac{d}{dt}|\partial_{\theta_\alpha}\widetilde V|^2\right|&=\left|\frac{d}{dt}\left(\nu_\alpha+\frac{\kappa_1}{N{T^\infty}}\sum_{\beta}\psi_{\alpha\beta}\sin(\theta_{\beta}-\theta_{\alpha} )\right)^2\right|\\
    &=\frac{2\kappa_1}{N{T^\infty}}\left|\nu_\alpha+\frac{\kappa_1}{NT^\infty}\sum_{\beta}\psi_{\alpha\beta}\sin(\theta_{\beta}-\theta_{\alpha} )\right|\left|\sum_{\beta}\psi_{\alpha\beta}\cos(\theta_{\beta}-\theta_{\alpha} )(\dot\theta_{\beta}-\dot\theta_{\alpha})\right|\\
    &\leq \frac{2\kappa_1\psi_{\max}}{N{T^\infty}}\left(|\nu_\alpha|+\frac{\kappa_1\psi_{\max}}{T^\infty} \right)\sum_{\beta}\left|\dot\theta_{\beta}-\dot\theta_{\alpha} \right|\\
    &\leq \frac{2\kappa_1\psi_{\max}}{N{T^\infty}}\left(|\nu_\alpha|+\frac{\kappa_1\psi_{\max}}{T^\infty} \right)\sum_{\beta}\left(|\nu_\alpha|+|\nu_\beta|+\frac{2\kappa_1\psi_{\max}}{T_1^{in}} \right)<\infty.
    \end{aligned} \]
    Finally, we use the analogous argument as in Theorem \ref{T3.1} to deduce the relation \eqref{rel2}:
    \[\nabla_{\Theta}\widetilde{V}(\Theta^\infty)=0. \]
\end{proof}

\begin{remark}
    If the state $(\Theta^\infty,T^\infty)$ satisfy the relation \eqref{rel2}, we have 
    \begin{equation}\label{necess}
    \sum_{\alpha} \nu_{\alpha}=-\frac{\kappa_1}{NT^\infty}\sum_{\alpha,\beta}\psi_{\alpha \beta}\sin(\theta_{\beta}^\infty-
    \theta_{\alpha}^\infty)=0. 
    \end{equation}
    Therefore, \eqref{necess} is a necessary condition for the \textit{a priori} uniform boundedness of $\Theta$.
\end{remark}

Now we have the boundedness of the solution for the heterogeneous TK model.
\begin{proposition}\label{P3.2}
Let $\{ (\theta_\alpha, T_\alpha) \}_{\alpha=1}^N$ be a solution to \eqref{C-2} with initial data $\{ (\theta_\alpha^{in}, T_\alpha^{in}) \}_{\alpha=1}^N$ satisfying
\begin{equation}\label{cond1}
\kappa_1,\kappa_2>0,\quad \psi_{\min}>0,\quad \zeta_{\min}>0,\quad 0<T_1^{in}\leq \cdots\leq T_N^{in}<\infty,\quad \sum_\alpha \nu_\alpha=0,
\end{equation}
and
\begin{equation}\label{cond2}
\frac{{\mathcal D}(\nu) T_N^{in}}{\kappa_1\psi_{\min}}<1,\quad {\mathcal D}(\Theta^{in}) <\pi-\theta_*,\quad \theta_*:=\arcsin\left(\frac{{\mathcal D}(\nu) T_N^{in}}{\kappa_1\psi_{\min}}\right)<\frac{\pi}{2}.
\end{equation}
Then, the following assertions hold.
\medskip
\begin{enumerate}
\item
The phase diameter is bounded:
\[
\sup_{0 \leq t < \infty} \mathcal{D}(\Theta(t)) \leq \pi-\theta_*.
\]
\item The phases are asymptotically concentrated to a quarter circle:
\[\limsup_{t\rightarrow\infty} {\mathcal D}(\Theta(t)) \leq \arcsin\left(\frac{{\mathcal D}(\nu) T^\infty}{\kappa_1\psi_{\min}}\right)\leq \theta_*.\]
\item
Each phase is uniformly bounded in time:
\[
\sup_{0 \leq t < \infty}\max_{\alpha}|\theta_{\alpha}(t)|<\infty.
\]
\end{enumerate}
\end{proposition}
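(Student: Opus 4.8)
The plan is to reduce all three assertions to a single Grönwall-type differential inequality for the phase diameter $D(t):=\mathcal{D}(\Theta(t))$, obtained by the extremal-index bookkeeping already used in Lemmas \ref{L2.1} and \ref{L3.1}. \emph{For assertion (1)}, I partition $[0,\infty)$ into intervals on which unique maximal and minimal indices $M',m'$ of the phases are defined and compute $\frac{d}{dt}(\theta_{M'}-\theta_{m'})$ on each such interval. Bounding $\nu_{M'}-\nu_{m'}\le\mathcal{D}(\nu)$, using $T_{M'},T_{m'}\le T_N^{in}$ from Lemma \ref{L2.1} together with $\psi_{\alpha\beta}\ge\psi_{\min}$, exploiting the signs $\sin(\theta_\beta-\theta_{M'})\le0$ and $\sin(\theta_\beta-\theta_{m'})\ge0$ (valid while $D<\pi$), and the elementary inequality $\sin a+\sin b\ge\sin(a+b)$ for $a,b\ge 0$ with $a+b\le\pi$, I arrive at
\[
\frac{d}{dt}D(t)\ \le\ \mathcal{D}(\nu)-\frac{\kappa_1\psi_{\min}}{T_N^{in}}\sin D(t)\qquad\text{whenever }D(t)<\pi.
\]
The right-hand side vanishes exactly at $D=\theta_*$ and $D=\pi-\theta_*$, since $\sin(\pi-\theta_*)=\sin\theta_*=\mathcal{D}(\nu)T_N^{in}/(\kappa_1\psi_{\min})$ by \eqref{cond2}, and it is strictly negative in between. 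Comparing $D$ with the scalar ODE $\dot z=\mathcal{D}(\nu)-\frac{\kappa_1\psi_{\min}}{T_N^{in}}\sin z$ with $z(0)=D(0)<\pi-\theta_*$ — whose solution remains in $[0,\pi-\theta_*)$ and decreases monotonically to $\theta_*$ — yields $\sup_{t\ge0}D(t)<\pi-\theta_*$, which is (1), and en route also $\limsup_{t\to\infty}D(t)\le\theta_*$.

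\emph{For assertion (2)}, I refine the constant $T_N^{in}$ to $T^\infty$ using Proposition \ref{P2.4}. Given $\varepsilon>0$, there is $t_\varepsilon$ with $\max_\alpha T_\alpha(t)\le T^\infty+\varepsilon$ for $t\ge t_\varepsilon$, so the same computation yields $\frac{d}{dt}D(t)\le\mathcal{D}(\nu)-\frac{\kappa_1\psi_{\min}}{T^\infty+\varepsilon}\sin D(t)$ for $t\ge t_\varepsilon$. Since $\limsup D(t)\le\theta_*<\pi/2$ from the previous step, eventually $D(t)$ enters $[0,\pi/2)$, on which the corresponding comparison ODE flows monotonically to its stable equilibrium $\arcsin\!\big(\mathcal{D}(\nu)(T^\infty+\varepsilon)/(\kappa_1\psi_{\min})\big)$ — a number in $[0,\pi/2)$ for $\varepsilon$ small, because $\mathcal{D}(\nu)T^\infty/(\kappa_1\psi_{\min})\le\mathcal{D}(\nu)T_N^{in}/(\kappa_1\psi_{\min})<1$ thanks to $T^\infty\le T_N^{in}$. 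Hence $\limsup_{t\to\infty}D(t)\le\arcsin\!\big(\mathcal{D}(\nu)(T^\infty+\varepsilon)/(\kappa_1\psi_{\min})\big)$, and letting $\varepsilon\downarrow0$ gives the claimed bound, which is itself $\le\theta_*$ by monotonicity of $\arcsin$.

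\emph{For assertion (3)}, I combine (1) with the asymptotic conservation of the total phase. Since $\sum_\alpha\nu_\alpha=0$ by \eqref{cond1}, Corollary \ref{C2.1} shows that $\frac{d}{dt}\sum_\alpha\theta_\alpha$ is bounded on $[0,\infty)$ and decays exponentially as $t\to\infty$, hence is integrable; therefore $\bar\theta(t):=\frac1N\sum_\alpha\theta_\alpha(t)$ stays bounded uniformly in $t$. Since $|\theta_\alpha(t)-\bar\theta(t)|\le\mathcal{D}(\Theta(t))\le\pi-\theta_*$ by (1), we conclude $\sup_{t\ge0}\max_\alpha|\theta_\alpha(t)|<\infty$.

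The main obstacle is the first step: deriving the differential inequality carefully despite $D(t)$ being only piecewise $C^1$ because of index crossings, and correctly reading off the geometry of the two roots $\theta_*$ and $\pi-\theta_*$ of the comparison nonlinearity so as to conclude that $[0,\pi-\theta_*)$ is forward invariant — this is exactly where the smallness conditions \eqref{cond2} are used. Once the inequality and the invariance are in hand, assertions (2) and (3) follow by combining it with Proposition \ref{P2.4} and Corollary \ref{C2.1} respectively.
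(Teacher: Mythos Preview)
Your proposal is correct and follows essentially the same approach as the paper: both derive the key differential inequality $\frac{d}{dt}\mathcal{D}(\Theta)\le\mathcal{D}(\nu)-\frac{\kappa_1\psi_{\min}}{T_N^{in}}\sin\mathcal{D}(\Theta)$ via the extremal-index bookkeeping, then exploit the two roots $\theta_*$ and $\pi-\theta_*$ of the right-hand side, and finally combine with Proposition~\ref{P2.4} and Corollary~\ref{C2.1} for parts (2) and (3). The only cosmetic difference is that you package (1)--(2) as a comparison with the scalar ODE $\dot z=\mathcal{D}(\nu)-\frac{\kappa_1\psi_{\min}}{T_N^{in}}\sin z$, whereas the paper phrases the same invariance/attraction as two short contradiction arguments; one minor slip is that the comparison solution $z$ need not \emph{decrease} to $\theta_*$ when $z(0)<\theta_*$, but this does not affect the conclusions you draw.
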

\medskip
\begin{proof}
\noindent (i)~ We follow the proof of Lemma \ref{L3.1}. Suppose on the contrary that the following set is nonempty:
\begin{equation}\label{set2}
\mathcal{S}:=\left\{t>0:~ {\mathcal D}(\Theta(t)) \geq  \pi-\theta_* \right\}.
\end{equation}
Then, the infimum $t_*:=\inf\mathcal{S}$ is finite and strictly positive.\\

\noindent Now, we again consider a time sequence $\{\tau_m \}_{m=0}^n$ satisfying
\[0=\tau_0<\tau_1<\cdots<\tau_n=t_*, \]
so that each time interval $(\tau_{i-1},\tau_i)$ attains a unique maximal and minimal $\theta_\alpha$. For each time $t\in(\tau_{i-1},\tau_i)$, we denote these unique extremal indices as $M=M_t$ and $m=m_t$, i.e.,
\[ \theta_M=\max_\alpha \theta_\alpha  \quad \mbox{and} \quad  \theta_m=\min_\alpha \theta_\alpha. \]
For each $i=1,\cdots,n$ and $t\in(\tau_{i-1},\tau_i)$, we then obtain
\begin{align*}
\frac{d}{dt}(\theta_M-\theta_m)&=(\nu_M-\nu_m)+\frac{\kappa_1}{N}\sum_{\beta}\left(\frac{\psi_{M\beta}}{T_M}\sin(\theta_\beta-\theta_M)-\frac{\psi_{m\beta}}{T_m}\sin(\theta_\beta-\theta_m)\right).
\end{align*}
\noindent This yields
\begin{align}\label{adler}
\begin{aligned}
\frac{d}{dt}(\theta_M-\theta_m)&=(\nu_M-\nu_m)+\frac{\kappa_1}{N}\sum_{\beta}\left(\frac{\psi_{M\beta}}{T_M}\sin(\theta_\beta-\theta_M)-\frac{\psi_{m\beta}}{T_m}\sin(\theta_\beta-\theta_m)\right)\\
&\leq {\mathcal D}(\nu)-\frac{\kappa_1}{N}\sum_{\beta}\left(\frac{\psi_{M\beta}}{T_M}\sin(\theta_M-\theta_\beta)+\frac{\psi_{m\beta}}{T_m}\sin(\theta_\beta-\theta_m)\right)\\
&\leq {\mathcal D}(\nu)-\frac{\kappa_1}{N}\sum_{\beta}\left(\frac{\psi_{\min}}{T_N^{in}}\sin(\theta_M-\theta_\beta)+\frac{\psi_{\min}}{T_N^{in}}\sin(\theta_\beta-\theta_m)\right)\\
&< {\mathcal D}(\nu)-\frac{\kappa_1\psi_{\min}}{T^{in}_N}\sin(\theta_M-\theta_m),\quad \forall~t\in(\tau_{i-1},\tau_i),~i=1,\cdots,N.
\end{aligned}
\end{align}
Here, we used 
\[\sin x+\sin y> \sin(x+y)\quad \text{for}\quad 0<x,y<x+y<\pi \]
in the last inequality.\\

\noindent On the other hand, recall that the continuity of $\mathcal{D}(\Theta)$ assures the existence of a small positive number $\delta(<t_*)$ such that 
\[\mathcal{D}(\Theta(t_*))=\pi-\theta_*,\,\, \mathcal{D}(\Theta(t))>\frac{\pi}{2}\quad \text{for}\quad t\in(t_*-\delta,t_*). \]

\noindent Then, it follows from the inequality \eqref{adler} that $\mathcal{D}(\Theta(t))$ is strictly decreasing in for $t\in(t_*-\delta,t_*)$, which contradicts to the minimality of $t_*$. 

\vspace{0.8cm}

\noindent (ii)~Again, we negate the conclusion (2) and derive a contradiction. Suppose that
\begin{equation}\label{negate}
\limsup_{t \to \infty}\mathcal{D}(\Theta(t))>\theta_*. 
\end{equation}  
Then, for sufficiently small $\varepsilon>0$, there exists a time $T_\varepsilon>0$ such that 
\[\mathcal{D}(\Theta(t))\geq\theta_*+\varepsilon,\quad \forall~ t\geq T_\varepsilon.  \]
Since $\mathcal{D}(\Theta(t))$ is always smaller than $\pi-\theta_*$, the derivative of the diameter $\mathcal{D}(\Theta)$ has a negative upper bound $-C$:
\[\begin{aligned}
\frac{d}{dt}\mathcal{D}(\Theta)&< D(\nu)-\frac{\kappa_1\psi_{\min}}{T^{in}_M}\sin\mathcal{D}(\Theta)\\
&\leq D(\nu)-\frac{\kappa_1\psi_{\min}}{T^{in}_M}\min\Big\{\sin(\theta_*+\varepsilon),\sin\mathcal{D}(\Theta(T_\varepsilon)) \Big\}=:-C<0,\quad \forall~ t\geq T_\varepsilon.
\end{aligned}  \]

\noindent However, this contradicts to the assumption \eqref{negate}, as we have 
\[\limsup_{t \to \infty}\mathcal{D}(\Theta(t))\leq\limsup_{t \to \infty} \Big(\mathcal{D}(\Theta(T_\varepsilon))-C(t-T_\varepsilon)\Big)=-\infty. \]

\noindent Therefore, we change the starting time to make $\max_{\alpha}T_\alpha$ closer to $T^\infty$ and apply \[\limsup_{t \to \infty} \mathcal{D}(\Theta(t))\leq \theta_*\] to obtain the desired result. \newline

\noindent (iii)~From Corollary \ref{C2.1}, we know that the average phase $\theta_c:=\frac{1}{N}\sum_{\alpha}\theta_\alpha$ converges exponentially, and therefore uniformly bounded in time. Hence, we have 

\[\begin{aligned}
\sup_{0 \leq t < \infty}|\theta_\alpha(t)|&\leq \sup_{0 \leq t < \infty}|\theta_\alpha(t)-\theta_c(t)|+\sup_{0 \leq t < \infty}|\theta_c(t)|\\
&\leq \sup_{0 \leq t < \infty}\mathcal{D}(\Theta(t))+\sup_{0 \leq t < \infty}|\theta_c(t)| <\infty,\quad \alpha=1,\cdots,N,
\end{aligned} \]
and conclude the desired uniform boundedness.
\end{proof}
Finally, we combine Lemma \ref{L3.3} and Proposition \ref{P3.2} to deduce the emergence of asymptotic equilibrium for the TK model with the distributed natural frequency.
\begin{theorem}\label{T3.2}
Let $\{ (\theta_\alpha, T_\alpha) \}_{\alpha=1}^N$ be a solution to \eqref{C-2} with initial data $\{ (\theta_\alpha^{in}, T_\alpha^{in}) \}_{\alpha=1}^N$ satisfying \eqref{cond1}--\eqref{cond2}.
Then, there exists a constant asymptotic state $(\Theta^\infty,T^\infty)$:
\[\lim_{t\to\infty}\theta_{\alpha}(t)=\theta_\alpha^\infty \quad \mbox{and} \quad \lim_{t\to\infty} T_\alpha(t)=T^\infty, \]
where the limit $(\Theta^\infty,T^\infty)$ satisfies the following relations for every $\alpha$:
\begin{equation}\label{equili}
T^\infty+\frac{\eta^2}{2T_*^2}{T^\infty}^2=\frac{1}{N}\sum_{\alpha} \left({T^{in}_\alpha}+\frac{\eta^2}{2T_*^2}{T^{in}_\alpha}^2\right), \quad  \nu_\alpha +\frac{\kappa_1}{NT^\infty}\sum_{\beta}\psi_{\alpha \beta}\sin(\theta_{\beta}^\infty-\theta_{\alpha}^\infty)=0.
\end{equation}
\end{theorem}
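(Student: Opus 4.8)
The plan is to obtain Theorem \ref{T3.2} as a direct corollary of the two results immediately preceding it: Proposition \ref{P3.2} supplies the one hypothesis of Lemma \ref{L3.3} that is not already on the table, and Lemma \ref{L3.3} then delivers the conclusion. Concretely, I would first observe that the standing assumptions \eqref{cond1} already contain $\kappa_1,\kappa_2>0$, $\psi_{\min}>0$, $\zeta_{\min}>0$ and (since $0<T_1^{in}$) $\min_\alpha T_\alpha^{in}>0$, so the only hypothesis of Lemma \ref{L3.3} still missing is the a priori uniform bound $\sup_{0\le t<\infty}\max_\alpha|\theta_\alpha(t)|\le D<\infty$.

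Next I would invoke Proposition \ref{P3.2}, whose hypotheses are precisely \eqref{cond1}--\eqref{cond2}: its part (iii) furnishes exactly the required finite constant $D$. Its proof combines the uniform-in-time diameter bound $\mathcal{D}(\Theta(t))\le\pi-\theta_*$ from part (i) with the exponential, hence uniform, convergence of the average phase $\theta_c=\frac1N\sum_\alpha\theta_\alpha$ from Corollary \ref{C2.1}, via the splitting $\theta_\alpha=(\theta_\alpha-\theta_c)+\theta_c$. Feeding this $D$ into Lemma \ref{L3.3} yields a constant state $(\theta_\alpha^\infty,T^\infty)$ with $\theta_\alpha(t)\to\theta_\alpha^\infty$ and $T_\alpha(t)\to T^\infty$, together with the relations \eqref{rel2}, which coincide verbatim with \eqref{equili}. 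I would also note in passing that summing the second relation of \eqref{equili} over $\alpha$ and using the antisymmetry of $\psi_{\alpha\beta}\sin(\theta_\beta^\infty-\theta_\alpha^\infty)$ reproduces $\sum_\alpha\nu_\alpha=0$, consistent with \eqref{cond1} and with the necessary condition \eqref{necess}; so nothing has to be verified separately there.

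There is essentially no obstacle at the level of Theorem \ref{T3.2} itself — it is pure assembly of Proposition \ref{P3.2} and Lemma \ref{L3.3}. If I were instead building the ingredients from scratch, the hard part would be Proposition \ref{P3.2}(i): showing that the two smallness conditions in \eqref{cond2} (on $\mathcal{D}(\Theta^{in})$ and on $\mathcal{D}(\nu)T_N^{in}/(\kappa_1\psi_{\min})$) suffice to confine $\mathcal{D}(\Theta(t))$ below $\pi-\theta_*$ for all $t$. That step hinges on the Adler-type differential inequality \eqref{adler} for the extremal pair of phases, together with the superadditivity estimate $\sin x+\sin y>\sin(x+y)$ on $0<x,y<x+y<\pi$, plus a continuity/bootstrap argument of the type used in Lemma \ref{L3.1}. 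The other nontrivial input, the convergence statement in Lemma \ref{L3.3}, is handled by rewriting \eqref{C-2} as an exponentially perturbed analytic gradient flow on the compact box $[-D,D]^N$ and applying the perturbation result Lemma \ref{L3.2}, with the exponential decay of the forcing coming from the temperature relaxation in Proposition \ref{P2.4}.
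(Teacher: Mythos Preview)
Your proposal is correct and matches the paper's own argument exactly: the paper states Theorem \ref{T3.2} immediately after remarking that one ``combine[s] Lemma \ref{L3.3} and Proposition \ref{P3.2}'' and gives no further proof. Your identification of Proposition \ref{P3.2}(iii) as the source of the missing a priori bound for Lemma \ref{L3.3}, and your recognition that \eqref{rel2} and \eqref{equili} are identical, are precisely the intended assembly.
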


\vspace{0.5cm}

\section{Estimation on the phase limit $\Theta^\infty$} \label{sec:4}
\setcounter{equation}{0}
In this section, we study asymptotic dynamics for the Cauchy problem for the TK model:
\begin{equation} \label{D-1}
\begin{cases}
\displaystyle \dot{\theta}_\alpha = \nu_\alpha+ \frac{\kappa_1}{N}\sum_{\beta} \frac{\psi_{\alpha\beta}}{T_\alpha} \sin(\theta_\beta-\theta_\alpha), \quad t > 0, ~~\alpha=1,\cdots,N,\\
\displaystyle \dot{T}_\alpha = \frac{\kappa_2}{N}\sum_{\beta} \frac{\zeta_{\alpha\beta}T_*^2}{(T_*^2+\eta^2 T_\alpha)}\left(\frac{1}{T_\alpha}-\frac{1}{T_\beta}\right), \\
\displaystyle (\theta_\alpha(0), T_\alpha(0)) = (\theta_\alpha^{in}, T_\alpha^{in}),\quad \sum_{\alpha} \nu_\alpha = 0.
\end{cases}
\end{equation}
Under the positive temperature framework, we have seen that the temperatures converge to the same value (see Proposition \ref{P2.4}): there exists an asymptotic temperature state $T^{\infty}(>0)$ such that 
\[ T^\infty+\frac{\eta^2}{2T_*^2}{T^\infty}^2=\frac{1}{N}\sum_{\alpha} \left({T^{in}_\alpha}+\frac{\eta^2}{2T_*^2}{T^{in}_\alpha}^2\right),\quad \lim_{t \to \infty} | T_\alpha(t) - T^{\infty} | = 0, \quad \alpha = 1, \cdots, N. \]
Thus, dynamics \eqref{D-1} formally reduces to the following Cauchy problem asymptotically:
\begin{equation} \label{D-2}
\begin{cases}
\displaystyle \dot{\phi}_\alpha = \nu_\alpha+\displaystyle\frac{\kappa_1}{N}\sum_{\beta} \frac{\psi_{\alpha\beta}}{T^\infty} \sin(\phi_\beta-\phi_\alpha),~~t > 0,~~\alpha = 1, \cdots, N, \\
\displaystyle  \phi_\alpha(0) = \theta_\alpha^{in}, \quad  \sum_{\alpha} \nu_\alpha = 0.
\end{cases}
\end{equation}
However, although Theorem \ref{T3.2} guarantees the convergence of the phase configuration $\Theta=(\theta_1,\cdots,\theta_N)$ under suitable framework, we cannot determine the asymptotic phases $\{\theta_\alpha^\infty\}_{\alpha=1}^N$ explicitly as the limit $T^\infty$ of temperature field. The reason here is of course the lack of conservation law for $\Theta$, which is one of the main difference between TK model and Kuramoto model. \\

In Kuramoto model \eqref{D-2}, the total sum $\sum_{\alpha}\phi_\alpha$ is conserved in time, while we only obtain the convergence of it in TK model by using Corollary \ref{C2.1}. Then, as the equilibrium solution is determined by the equations of relative phases
\begin{equation}\label{phaselock}
\nu_\alpha+\frac{\kappa_1}{NT^\infty}\sum_{\beta}\psi_{\alpha \beta}\sin(\phi_\beta^\infty-\phi_\alpha^\infty)=0,\quad \alpha=1,\cdots,N,
\end{equation}
one can explicitly determine $\{\phi_\alpha^\infty\}_{\alpha=1}^N$ by solving \eqref{phaselock} and with
\[\sum_{\alpha}\theta_\alpha^{in}=\sum_{\alpha}\phi_\alpha^\infty. \]
Now, since every equilibrium solution $(\Theta^\infty,T^\infty)$ of TK model satisfies the equations
\begin{equation*}
T^\infty+\frac{\eta^2}{2T_*^2}{T^\infty}^2=\frac{1}{N}\sum_{\alpha} \left({T^{in}_\alpha}+\frac{\eta^2}{2T_*^2}{T^{in}_\alpha}^2\right), \quad  \nu_\alpha +\frac{\kappa_1}{NT^\infty}\sum_{\beta}\psi_{\alpha \beta}\sin(\theta_{\beta}^\infty-\theta_{\alpha}^\infty)=0,
\end{equation*}
it is clear that the limit of average phase $\theta_c^\infty:=\frac{1}{N}\sum_{\alpha}\theta_\alpha^\infty$ for the TK model will also determine the limit configuration $\Theta^\infty$ as in Kuramoto model. Hence, we here provide an upper bound estimate of average phase limit $\theta_c^\infty$ for later use.  

\begin{lemma}\label{L4.1}
    Let $\{(\theta_\alpha,T_\alpha)\}_{\alpha=1}^N$ be a solution to \eqref{D-1} with the initial data $\{(\theta_\alpha^{in},T_\alpha^{in})\}_{\alpha=1}^N$ satisfying 
    \[\kappa_1, \kappa_2>0,\quad \psi_{\min}>0,\quad \zeta_{\min}>0,\quad 0<T_1^{in}\leq\cdots\leq T_N^{in}<\infty. \]
    Then, the limit of average phase $\theta_c^\infty$ satisfies 
    \[\left|\theta_c^\infty-\theta_c^{in}\right|\leq \frac{\kappa_1\psi_{\max}}{\kappa_2\zeta_{\min}}\cdot \frac{{T_N^{in}}^2(T_*^2+\eta^2{T_N^{in}})(T_N^{in}-T_1^{in})}{T_*^2T^\infty T_1^{in}}, \]
    where $\theta_c^{in} :=\frac{1}{N}\sum_{\alpha}\theta_\alpha^{in}$.
\end{lemma}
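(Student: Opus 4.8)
The plan is to write $\theta_c^\infty-\theta_c^{in}$ as a convergent time integral and then estimate that integral via the exponential relaxation of the temperature diameter $\mathcal{D}(T)$. First I would revisit the computation in the proof of Corollary \ref{C2.1}: using $\sum_\alpha\nu_\alpha=0$ together with the symmetry $\psi_{\alpha\beta}=\psi_{\beta\alpha}$ and the index swap $(\alpha,\beta)\leftrightarrow(\beta,\alpha)$, one obtains
\[
\frac{d}{dt}\sum_{\alpha}\theta_\alpha \;=\; \frac{\kappa_1}{2N}\sum_{\alpha,\beta}\psi_{\alpha\beta}\Big(\frac{1}{T_\alpha}-\frac{1}{T_\beta}\Big)\sin(\theta_\beta-\theta_\alpha).
\]
Since Corollary \ref{C2.1} already guarantees that $\sum_\alpha\theta_\alpha$ converges (exponentially fast), integrating this identity over $[0,\infty)$ represents $N(\theta_c^\infty-\theta_c^{in})$ as the integral of the right-hand side.

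Next I would bound the integrand pointwise in time. By Lemma \ref{L2.1}, $\min_\alpha T_\alpha(t)$ is nondecreasing and $\max_\alpha T_\alpha(t)$ is nonincreasing; since both tend to the common limit $T^\infty$ by Proposition \ref{P2.4}, we have $\min_\alpha T_\alpha(t)\le T^\infty\le\max_\alpha T_\alpha(t)$ for every $t\ge0$, hence $|T_\alpha(t)-T^\infty|\le\mathcal{D}(T(t))$ for all $\alpha$. Splitting through $T^\infty$ and using the lower bound $T_\alpha(t)\ge T_1^{in}$ from Lemma \ref{L2.1},
\[
\Big|\frac{1}{T_\alpha}-\frac{1}{T_\beta}\Big|\le\frac{|T_\alpha-T^\infty|}{T_\alpha T^\infty}+\frac{|T^\infty-T_\beta|}{T^\infty T_\beta}\le\frac{2\,\mathcal{D}(T(t))}{T_1^{in}T^\infty},
\]
so that, with $\psi_{\alpha\beta}\le\psi_{\max}$ and $|\sin(\cdot)|\le1$,
\[
\Big|\frac{d}{dt}\sum_{\alpha}\theta_\alpha\Big|\le\frac{\kappa_1 N\psi_{\max}}{T_1^{in}T^\infty}\,\mathcal{D}(T(t)).
\]

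Finally, I would reuse the Gr\"onwall inequality established inside the proof of Proposition \ref{P2.4},
\[
\frac{d}{dt}\mathcal{D}(T)\le-\frac{\kappa_2\zeta_{\min}T_*^2}{{T_N^{in}}^2(T_*^2+\eta^2T_N^{in})}\,\mathcal{D}(T),\qquad \mathcal{D}(T(0))=T_N^{in}-T_1^{in},
\]
which yields $\int_0^\infty\mathcal{D}(T(t))\,dt\le\dfrac{{T_N^{in}}^2(T_*^2+\eta^2T_N^{in})(T_N^{in}-T_1^{in})}{\kappa_2\zeta_{\min}T_*^2}$; substituting this into the previous bound and dividing by $N$ produces exactly the claimed estimate. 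The only genuinely delicate point is keeping the constants sharp — in particular getting the factor $T_1^{in}T^\infty$ in the denominator rather than the cruder $(T_1^{in})^2$, which is precisely what the split through $T^\infty$ buys (legitimate because $T^\infty$ lies between the extremal temperatures at every time), combined with recalling the correct relaxation rate for $\mathcal{D}(T)$ from Proposition \ref{P2.4}; the remaining steps are routine integration and bookkeeping.
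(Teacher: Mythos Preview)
Your proof is correct and follows essentially the same route as the paper. The only cosmetic difference is that the paper writes $\dot\theta_c=\frac{\kappa_1}{N^2}\sum_{\alpha,\beta}\psi_{\alpha\beta}\big(\tfrac{1}{T_\alpha}-\tfrac{1}{T^\infty}\big)\sin(\theta_\beta-\theta_\alpha)$ directly (subtracting the constant $\tfrac{1}{T^\infty}$ using the antisymmetry of the sine sum) and then bounds $\big|\tfrac{1}{T_\alpha}-\tfrac{1}{T^\infty}\big|$, whereas you start from the symmetrized expression of Corollary~\ref{C2.1} and reach the same factor $\tfrac{1}{T_1^{in}T^\infty}$ via a triangle-inequality split through $T^\infty$; the resulting pointwise bound on $|\dot\theta_c|$ and the subsequent integration of the Gr\"onwall estimate for $\mathcal{D}(T)$ are identical.
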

\begin{proof}
    We rewrite the differential equation for $\theta_c$ as below: 
    \[\begin{aligned}
    \dot\theta_c=\frac{\kappa_1}{N^2}\sum_{\alpha, \beta}\frac{\psi_{\alpha\beta}}{T_\alpha}\sin(\theta_\beta-\theta_\alpha)=\frac{\kappa_1}{N^2}\sum_{\alpha, \beta}\psi_{\alpha\beta}\left(\frac{1}{T_\alpha}-\frac{1}{T_\infty}\right)\sin(\theta_\beta-\theta_\alpha).
    \end{aligned} \]
    Then, we use the exponential decay estimate in Proposition \ref{P2.4} to deduce
    \[\begin{aligned}
    \left|\theta_c^\infty-\theta_c^{in}\right|&\leq \int_{0}^{\infty}|\dot\theta_c(t)| dt \leq \frac{\kappa_1\psi_{\max}}{T_1T_\infty}\int_{0}^{\infty}|T_N(t)-T_1(t)| dt\\
    &\leq \frac{\kappa_1\psi_{\max}}{T_1T_\infty}\int_{0}^{\infty}|T_N^{in}-T_1^{in}|e^{-\frac{\kappa_2\zeta_{\min}T_*^2}{{T^{in}_N}^2(T_*^2+\eta^2{T^{in}_N})}t} dt\\
    &=\frac{\kappa_1\psi_{\max}}{\kappa_2\zeta_{\min}}\cdot \frac{{T_N^{in}}^2(T_*^2+\eta^2{T_N^{in}})(T_N^{in}-T_1^{in})}{T_*^2T^\infty T_1^{in}}.
    \end{aligned} \]
\end{proof}
In \cite{C-H-J-K}, authors provided $\ell^1$-stability result for the Kuramoto model \eqref{D-2} with $\psi_{\alpha\beta}\equiv1$ when the initial configuration is confined in a half circle, and coupling strength $\kappa_1$ is sufficiently large. As a corollary, they obtained the uniqueness of equilibrium state in a half circle for given initial average phase $\theta_c^{in}$. If we can prove the analogous result for generic static network $\psi_{\alpha \beta}=\psi_{\beta\alpha}>0$, the phase limit $\Theta^\infty$ can be represented by the unique solution of the Kuramoto model and the distance between the average phase limits of TK model and Kuramoto model. 

\begin{lemma}\label{L4.2}
    Let $\Phi = \{ \phi_\alpha \}_{\alpha=1}^N$ and ${\tilde \Phi} = \{ \widetilde\phi_\alpha \}_{\alpha=1}^N$ be two solutions to the Kuramoto model \eqref{D-2} with the initial data $(\phi_\alpha^{in})_{\alpha=1}^N ,(\widetilde\phi_\alpha^{in})_{\alpha=1}^N$ satisfying 
    \[\kappa_1>0,\quad \sum_{\alpha}\phi_\alpha^{in}=\sum_{\alpha}\widetilde\phi_\alpha^{in},\quad \frac{{\mathcal D}(\nu) T^\infty}{\kappa_1\psi_{\min}}<1, \quad \max\{ {\mathcal D}(\phi^{in}),{\mathcal D}(\widetilde\phi^{in})  \} <\pi-\arcsin\left(\frac{{\mathcal D}(\nu) T^\infty}{\kappa_1\psi_{\min}}\right).
    \]
    Then, we have 
    \begin{equation}\label{stability}
   \| \Phi(t) - {\tilde \Phi}(t) \|_{1} \lesssim e^{-\frac{\kappa_1\psi_{\min}}{T^\infty}\cdot \frac{\sin2D^\infty}{2D^\infty}t},
    \end{equation}
    for every $D^\infty\in \left(\arcsin\left(\frac{{\mathcal D}(\nu) T^\infty}{\kappa_1\psi_{\min}}\right),\frac{\pi}{2}\right)$. 
\end{lemma}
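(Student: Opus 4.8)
The plan is to adapt the $\ell^1$-contraction argument of \cite{C-H-J-K} from the mean-field coupling to the weighted network $(\psi_{\alpha\beta})$, with $\kappa_1/T^\infty$ in the role of the coupling strength. First I would note that the Kuramoto model \eqref{D-2} is exactly the isothermal specialization of \eqref{C-2} obtained by freezing every temperature at $T_\alpha\equiv T^\infty$ (a stationary solution of $\eqref{C-2}_2$ when all initial temperatures coincide), and that in this specialization the hypotheses of Lemma \ref{L4.2} on $\mathcal{D}(\nu)$, $\mathcal{D}(\phi^{in})$ and $\mathcal{D}(\widetilde\phi^{in})$ become precisely conditions \eqref{cond1}--\eqref{cond2}. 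Hence Proposition \ref{P3.2} applies to each of the two trajectories $\Phi$ and $\widetilde\Phi$: both $\sup_{t\ge0}\mathcal{D}(\Phi(t))$ and $\sup_{t\ge0}\mathcal{D}(\widetilde\Phi(t))$ are finite, and $\limsup_{t\to\infty}\mathcal{D}(\Phi(t))$, $\limsup_{t\to\infty}\mathcal{D}(\widetilde\Phi(t))$ are both at most $\arcsin\big(\mathcal{D}(\nu)T^\infty/(\kappa_1\psi_{\min})\big)$. Since $D^\infty$ is chosen strictly above this arcsine, there is a time $t_0\ge0$ after which $D(t):=\max\{\mathcal{D}(\Phi(t)),\mathcal{D}(\widetilde\Phi(t))\}\le D^\infty<\pi/2$.

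Next I would set $\delta_\alpha:=\phi_\alpha-\widetilde\phi_\alpha$. Because $\sum_\alpha\dot\phi_\alpha=\sum_\alpha\nu_\alpha=\sum_\alpha\dot{\widetilde\phi}_\alpha$ and the two initial total phases agree, one has $\sum_\alpha\delta_\alpha(t)\equiv0$; and since each $\delta_\alpha$ is real-analytic (a difference of analytic solutions), it is either identically zero or has only isolated zeros, so $\|\delta(t)\|_1=\sum_\alpha|\delta_\alpha(t)|$ is locally Lipschitz and $\frac{d}{dt}\|\delta\|_1=\sum_\alpha\mathrm{sgn}(\delta_\alpha)\dot\delta_\alpha$ for almost every $t$ (the zeros of the $\delta_\alpha$ being handled exactly as the crossings of temperatures in Lemma \ref{L2.1} and of phases in Lemma \ref{L3.1}). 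Inserting \eqref{D-2}, using $\psi_{\alpha\beta}=\psi_{\beta\alpha}$ together with the antisymmetry of $\sin(\phi_\beta-\phi_\alpha)-\sin(\widetilde\phi_\beta-\widetilde\phi_\alpha)$ under $\alpha\leftrightarrow\beta$, a symmetrization yields, for almost every $t$,
\[
\frac{d}{dt}\|\delta\|_1=\frac{\kappa_1}{2NT^\infty}\sum_{\alpha,\beta}\psi_{\alpha\beta}\big(\mathrm{sgn}(\delta_\alpha)-\mathrm{sgn}(\delta_\beta)\big)\big[\sin(\phi_\beta-\phi_\alpha)-\sin(\widetilde\phi_\beta-\widetilde\phi_\alpha)\big].
\]

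The crux is the elementary bound that for $|a|,|b|\le D<\pi/2$ with $a\ne b$,
\[
\frac{\sin a-\sin b}{a-b}=\cos\!\Big(\frac{a+b}{2}\Big)\cdot\frac{\sin\frac{a-b}{2}}{\frac{a-b}{2}}\ \ge\ \cos D\cdot\frac{\sin D}{D}=\frac{\sin 2D}{2D}>0,
\]
which follows from $|\tfrac{a\pm b}{2}|\le D$ and the monotone decrease of $x\mapsto\tfrac{\sin x}{x}$ on $[0,\pi]$. For $t\ge t_0$ I apply this with $a=\phi_\beta-\phi_\alpha$, $b=\widetilde\phi_\beta-\widetilde\phi_\alpha$ (so $a-b=\delta_\beta-\delta_\alpha$) and $D=D(t)\le D^\infty$; since $x\mapsto\tfrac{\sin2x}{2x}$ is decreasing on $(0,\pi/2]$, the weakest admissible constant is $\tfrac{\sin2D^\infty}{2D^\infty}$. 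Each summand above is then $\le-\psi_{\min}\tfrac{\sin2D^\infty}{2D^\infty}\big(\mathrm{sgn}(\delta_\alpha)-\mathrm{sgn}(\delta_\beta)\big)(\delta_\alpha-\delta_\beta)$, and summing over $\alpha,\beta$, using the pointwise nonnegativity of $\big(\mathrm{sgn}(\delta_\alpha)-\mathrm{sgn}(\delta_\beta)\big)(\delta_\alpha-\delta_\beta)$ and the identity $\sum_{\alpha,\beta}\big(\mathrm{sgn}(\delta_\alpha)-\mathrm{sgn}(\delta_\beta)\big)(\delta_\alpha-\delta_\beta)=2N\|\delta\|_1$ (a direct consequence of $\sum_\alpha\delta_\alpha=0$), one obtains $\frac{d}{dt}\|\delta\|_1\le-\frac{\kappa_1\psi_{\min}}{T^\infty}\cdot\frac{\sin2D^\infty}{2D^\infty}\|\delta\|_1$ for almost every $t\ge t_0$. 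Grönwall's inequality on $[t_0,\infty)$ together with the trivial continuity bound for $\|\delta\|_1$ on the compact interval $[0,t_0]$ then yields \eqref{stability}. The only genuinely delicate points are the use of Proposition \ref{P3.2} to force both phase diameters eventually below the prescribed $D^\infty$ — which is exactly why the smallness of $\mathcal{D}(\nu)$ and the confinement of both initial data are assumed — and the almost-everywhere differentiation of $\|\delta\|_1$ across the isolated zeros of the $\delta_\alpha$; the rest is routine.
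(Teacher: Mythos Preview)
Your argument is correct and follows essentially the same route as the paper: invoke Proposition \ref{P3.2} (in its isothermal specialization) to confine both phase diameters below $D^\infty$ after some time, differentiate $\|\delta\|_1$ using the sign function on intervals between the isolated zeros, symmetrize using $\psi_{\alpha\beta}=\psi_{\beta\alpha}$, apply the product formula for $\sin a-\sin b$ to extract the factor $\cos D^\infty\cdot\tfrac{\sin D^\infty}{D^\infty}=\tfrac{\sin 2D^\infty}{2D^\infty}$, and close with the identity $\sum_{\alpha,\beta}(\mathrm{sgn}\,\delta_\alpha-\mathrm{sgn}\,\delta_\beta)(\delta_\alpha-\delta_\beta)=2N\|\delta\|_1$ (from $\sum_\alpha\delta_\alpha=0$) and Gr\"onwall. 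The only cosmetic difference is that you package the key trigonometric estimate as a single mean-value bound on $\tfrac{\sin a-\sin b}{a-b}$, whereas the paper handles the cosine and sine factors in two separate steps; the content is identical.
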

\begin{proof}
    Although the proof is indeed analogous to \cite{C-H-J-K}, we here present a proof for reader's convenience. \\
    
    \noindent First, it follows from the analyticity of $\phi_\alpha$ and $\widetilde\phi_\alpha$ that the set $\left\{t\geq 0: \phi_\alpha(t)-\widetilde\phi_\alpha(t)=0 \right\}$ is either discrete or whole time $[0,\infty)$. Therefore, we can find an increasing sequence of times $\{\tau_i\}_{i\geq 0}$ such that for each $i\geq 1$ and $\alpha=1,\cdots,N$, the sign of $\phi_\alpha-\widetilde\phi_\alpha$ is unchanged in $(\tau_{i-1},\tau_i)$:
    \[\textrm{sgn}\left(\phi_\alpha(t_1)-\widetilde\phi_\alpha(t_1)\right)=\textrm{sgn}\left(\phi_\alpha(t_2)-\widetilde\phi_\alpha(t_2)\right),\quad \forall ~ t_1, t_2\in(\tau_{i-1},\tau_i), \]
    where $\textrm{sgn}:\mathbb{R}\to\mathbb{R}$ is a function defined by
    \[\textrm{sgn}(x)=\begin{cases}
    1 & x>0,\\
    0 & x=0,\\
    -1 & x<0.
    \end{cases} \]
    Then, the $\ell^1$-distance function $\sum_{\alpha} |\phi_\alpha-\widetilde\phi_\alpha|$ is differentiable in each $(\tau_{i-1},\tau_i)$, and the derivative can be written as 
    \[\begin{aligned}
    \frac{d}{dt} \left(\sum_{\alpha} |\phi_\alpha-\widetilde\phi_\alpha|\right)= \frac{\kappa_1}{NT^\infty}\sum_{\alpha, \beta}\psi_{\alpha \beta}\textrm{sgn}(\phi_{\alpha}-\widetilde\phi_\alpha)\left(\sin(\phi_\beta-\phi_\alpha)-\sin(\widetilde\phi_\beta-\widetilde\phi_\alpha) \right).
    \end{aligned} \]
    Now, we denote $z_\alpha:=\phi_\alpha-\widetilde\phi_\alpha$ for simplicity and rewrite the above equation as below: 
    \begin{align}\label{distance}
    \begin{aligned}
    &\frac{d}{dt}\left(\sum_{\alpha}|z_\alpha|\right) \\
    & \hspace{0.5cm} =\frac{\kappa_1}{NT^\infty}\sum_{\alpha, \beta}\psi_{\alpha \beta}\textrm{sgn}(z_\alpha)\left(\sin(\phi_\beta-\phi_\alpha)-\sin(\widetilde\phi_\beta-\widetilde\phi_\alpha) \right)\\
    & \hspace{0.5cm}  =\frac{2\kappa_1}{NT^\infty}\sum_{\alpha, \beta}\psi_{\alpha \beta}\textrm{sgn}(z_\alpha)\cos\left(\frac{\phi_\beta-\phi_\alpha}{2}+\frac{\widetilde\phi_\beta-\widetilde\phi_\alpha}{2} \right)\sin\left(\frac{z_\beta-z_\alpha}{2}\right)\\
    & \hspace{0.5cm} = \frac{\kappa_1}{NT^\infty}\sum_{\alpha, \beta}\psi_{\alpha \beta}\left(\textrm{sgn}(z_\alpha)-\textrm{sgn}(z_\beta)\right)\sin\left(\frac{z_\beta-z_\alpha}{2}\right)\cos\left(\frac{\phi_\beta-\phi_\alpha}{2}+\frac{\widetilde\phi_\beta-\widetilde\phi_\alpha}{2} \right).
    \end{aligned}
    \end{align} 
    On the other hand, since we can apply Proposition \ref{P3.2} to the Kuramoto model \eqref{D-2} as a special case $T_1^{in}=\cdots=T_N^{in}=T^\infty$ for \eqref{D-1}, we know that the phase diameters $\mathcal{D}(\phi(t))$ and $\mathcal{D}(\widetilde\phi(t))$ are uniformly bounded by $\pi$ and become smaller than $\frac{\pi}{2}$ after sufficiently long time.  More precisely, for every $D^\infty\in \left(\arcsin\left(\frac{{\mathcal D}(\nu) T^\infty}{\kappa_1\psi_{\min}}\right),\frac{\pi}{2}\right)$ there exists a time $T_{D^\infty}$ such that 
    \[\mathcal{D}(\phi(t)),\mathcal{D}(\widetilde\phi(t))<D_\infty, \quad \forall~t>T_{D^\infty}. \]
    Therefore, for $t>T_{D^\infty}$, we have 
    \begin{equation}\label{sign}
    \left|\frac{\phi_\beta-\phi_\alpha}{2}+\frac{\widetilde\phi_\beta-\widetilde\phi_\alpha}{2}\right|<D_\infty,\quad |z_\beta-z_\alpha|<2D^\infty<\pi,
    \end{equation}
    and deduce a following Grönwall's inequality from \eqref{distance}:
    \begin{align}\label{Grnwll}
    \begin{aligned}
    \frac{d}{dt}\left(\sum_{\alpha}|z_\alpha|\right)&\leq -\frac{\kappa_1\cos D^\infty}{NT^\infty}\sum_{\alpha, \beta} \psi_{\alpha \beta}\left(\textrm{sgn}(z_\alpha)-\textrm{sgn}(z_\beta)\right)\sin\left(\frac{z_\alpha-z_\beta}{2}\right)\\
    &\leq -\frac{\kappa_1\psi_{\min}\cos D^\infty}{NT^\infty}\sum_{\alpha, \beta} \left(\textrm{sgn}(z_\alpha)-\textrm{sgn}(z_\beta)\right)\sin\left(\frac{z_\alpha-z_\beta}{2}\right)\\
    &\leq -\frac{\kappa_1\psi_{\min}\cos D^\infty\sin D^\infty}{2NT^\infty D^\infty}\sum_{\alpha, \beta} \left(\textrm{sgn}(z_\alpha)-\textrm{sgn}(z_\beta)\right)\left({z_\alpha-z_\beta}\right)\\
    &= -\frac{\kappa_1\psi_{\min}\cos D^\infty\sin D^\infty}{T^\infty D^\infty}\left(\sum_{\alpha}|z_\alpha|\right).
    \end{aligned}
    \end{align}
    Here, we used 
    \[\left(\textrm{sgn}(z_\alpha)-\textrm{sgn}(z_\beta)\right)\sin\left(\frac{z_\alpha-z_\beta}{2}\right)\geq \left(\textrm{sgn}(z_\alpha)-\textrm{sgn}(z_\beta)\right)\left(\frac{z_\alpha-z_\beta}{2}\right)\cdot\frac{\sin D^\infty}{D^\infty}\geq 0,\]
    and 
    \[\sum_{\alpha, \beta} \left(\textrm{sgn}(z_\alpha)-\textrm{sgn}(z_\beta)\right)\left({z_\alpha-z_\beta}\right)=2N\sum_{\alpha}|z_\alpha|, \]
    which are derived from $\eqref{sign}_2$ and
    \[  \sum_{\alpha} z_\alpha=\sum_{\alpha} \phi_\alpha-\sum_{\alpha} \widetilde\phi_\alpha=0, \]
    respectively.\\
    
    Finally, we use the inequality \eqref{Grnwll} in each time interval $(\tau_{i-1},\tau_i)$ and the continuity of $z_\alpha$ to conclude the desired stability estimate \eqref{stability}.
\end{proof}

\begin{remark}
    Suppose that there are two roots $\phi^\infty$ and $\widetilde{\phi}^\infty$ of $\eqref{phaselock}$ satisfying the condition 
    \begin{equation}\label{cond}
    \frac{{\mathcal D}(\nu) T^\infty}{\kappa_1\psi_{\min}}<1,\quad \sum_{\alpha}\phi_\alpha^\infty=0, \quad {\mathcal D}(\phi^{\infty}) <\pi-\arcsin\left(\frac{{\mathcal D}(\nu) T^\infty}{\kappa_1\psi_{\min}}\right).
    \end{equation}
    Then, the $\ell^1$-distance $\sum_{\alpha} |\phi_\alpha^\infty-\widetilde\phi_\alpha^\infty|$ must be a constant, but also decays exponentially from Lemma \ref{L4.2}. This indicates that the $\ell^1$-distance is indeed $0$ and there are only one solution for \eqref{cond}.
\end{remark}
Now, we combine Lemma \ref{L4.1} and Lemma \ref{L4.2} to describe the limit state of TK model \eqref{D-1}.
\begin{theorem}\label{T4.1}
   Suppose that system parameters $\kappa_1, \kappa_2, \psi_{\alpha\beta}, \zeta_{\alpha \beta}, \nu$ and initial data $(\theta_\alpha^{in},T_\alpha^{in})_{\alpha=1}^N$ satisfy
    \eqref{cond1}--\eqref{cond2}, and let $\{ (\theta_{\alpha},T_\alpha) \}_{\alpha=1}^N$ be a solution to \eqref{D-1}. Then, there exists a constant number $z\in\mathbb{R}$ such that 
    \[\begin{aligned}
    \lim_{t \to \infty}\theta_{\alpha}(t)=\lim_{t \to \infty}\phi_\alpha(t)+z,\quad |z|\leq \frac{\kappa_1\psi_{\max}}{\kappa_2\zeta_{\min}}\cdot \frac{{T_N^{in}}^2(T_*^2+\eta^2{T_N^{in}})(T_N^{in}-T_1^{in})}{T_*^2T^\infty T_1^{in}},\quad \alpha=1,\cdots,N,
    \end{aligned} \]
    where $\{ \phi_{\alpha} \}$ is a solution to the Kuramoto model \eqref{D-2} with same initial phase configuration $(\theta_\alpha^{in})$.
\end{theorem}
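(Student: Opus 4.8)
The plan is to assemble this from results already proved: combine the convergence theorem for the TK model (Theorem~\ref{T3.2}) with its specialization to the limiting Kuramoto model \eqref{D-2}, and then identify the two limit configurations up to a global phase shift by means of the $\ell^1$-contraction estimate of Lemma~\ref{L4.2}.

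First I would observe that the asymptotic temperature $T^\infty$ defined by \eqref{B-10} satisfies $T_1^{in}\le T^\infty\le T_N^{in}$, so that the hypotheses \eqref{cond1}--\eqref{cond2} imposed on \eqref{D-1} transfer verbatim to \eqref{D-2}, regarded as the special case of \eqref{C-2} with $T_\alpha^{in}\equiv T^\infty$: indeed $\frac{\mathcal D(\nu)T^\infty}{\kappa_1\psi_{\min}}\le\frac{\mathcal D(\nu)T_N^{in}}{\kappa_1\psi_{\min}}<1$ and $\arcsin\!\big(\frac{\mathcal D(\nu)T^\infty}{\kappa_1\psi_{\min}}\big)\le\theta_*$, whence $\mathcal D(\Theta^{in})<\pi-\theta_*\le\pi-\arcsin\!\big(\frac{\mathcal D(\nu)T^\infty}{\kappa_1\psi_{\min}}\big)$. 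Hence Theorem~\ref{T3.2} applies to both systems and produces $\lim_{t\to\infty}\theta_\alpha(t)=\theta_\alpha^\infty$ and $\lim_{t\to\infty}\phi_\alpha(t)=\phi_\alpha^\infty$, where both $\Theta^\infty$ and $\Phi^\infty$ solve the phase-locking relation \eqref{phaselock}. Since \eqref{D-2} conserves the total phase, $\frac1N\sum_\alpha\phi_\alpha^\infty=\frac1N\sum_\alpha\theta_\alpha^{in}=:\theta_c^{in}$, and I set $z:=\theta_c^\infty-\theta_c^{in}$ with $\theta_c^\infty:=\frac1N\sum_\alpha\theta_\alpha^\infty$.

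Next I would consider the shifted configuration $\widetilde\Phi^\infty:=\Theta^\infty-z\mathbf 1$. Because \eqref{phaselock} depends on the phases only through their differences, $\widetilde\Phi^\infty$ is again a solution of \eqref{phaselock}, i.e.\ a constant-in-time solution of the Kuramoto model \eqref{D-2}, and by construction $\frac1N\sum_\alpha\widetilde\phi_\alpha^\infty=\theta_c^\infty-z=\theta_c^{in}=\frac1N\sum_\alpha\phi_\alpha^\infty$. Applying Proposition~\ref{P3.2}(ii) to \eqref{D-1} and to \eqref{D-2} respectively yields $\mathcal D(\widetilde\Phi^\infty)=\mathcal D(\Theta^\infty)\le\arcsin\!\big(\frac{\mathcal D(\nu)T^\infty}{\kappa_1\psi_{\min}}\big)$ and $\mathcal D(\Phi^\infty)\le\arcsin\!\big(\frac{\mathcal D(\nu)T^\infty}{\kappa_1\psi_{\min}}\big)<\pi/2$, so both configurations satisfy the diameter hypothesis of Lemma~\ref{L4.2}. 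Feeding the two constant solutions $\Phi(t)\equiv\Phi^\infty$ and $\widetilde\Phi(t)\equiv\widetilde\Phi^\infty$ of \eqref{D-2} into Lemma~\ref{L4.2}, the quantity $\sum_\alpha|\phi_\alpha^\infty-\widetilde\phi_\alpha^\infty|$ is simultaneously independent of $t$ and bounded by a multiple of a decaying exponential, hence vanishes identically. Therefore $\theta_\alpha^\infty=\phi_\alpha^\infty+z$ for every $\alpha$, and the bound on $|z|=|\theta_c^\infty-\theta_c^{in}|$ is precisely the content of Lemma~\ref{L4.1}.

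I do not expect a genuine obstacle here, since the statement is essentially an assembly of Theorems~\ref{T3.2}, Proposition~\ref{P3.2} and Lemmas~\ref{L4.1}--\ref{L4.2}; the two points demanding care are the hypothesis transfer (checking $T^\infty\in[T_1^{in},T_N^{in}]$ so that Theorem~\ref{T3.2} and Proposition~\ref{P3.2} may be invoked on \eqref{D-2}) and the legitimacy of inserting two equilibria into Lemma~\ref{L4.2} as constant-in-time solutions. Once these are granted, the identification follows at once from the elementary remark that a time-independent quantity dominated by a decaying exponential must be zero. Alternatively one could quote the uniqueness statement in the Remark following Lemma~\ref{L4.2} after first normalizing $\sum_\alpha\theta_\alpha^\infty$ to zero, but the constant-solution route avoids that extra bookkeeping.
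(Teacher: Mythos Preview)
Your proposal is correct and follows essentially the same route as the paper's (sketch) proof: invoke Theorem~\ref{T3.2} for the existence of $\Theta^\infty$ and $\Phi^\infty$ satisfying \eqref{phaselock}, use Lemma~\ref{L4.2} (via constant-in-time solutions, equivalently the Remark following it) to conclude that two equilibria with the same phase sum must coincide, and then bound the shift $z=\theta_c^\infty-\theta_c^{in}$ by Lemma~\ref{L4.1}. Your write-up is in fact more careful than the paper's in checking that the hypotheses \eqref{cond1}--\eqref{cond2} transfer to \eqref{D-2} through $T^\infty\in[T_1^{in},T_N^{in}]$, and that the diameter condition of Lemma~\ref{L4.2} is met by the two equilibria via Proposition~\ref{P3.2}(ii).
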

\begin{proof}
	We just sketch how can conclude Theorem \ref{T4.1} from Lamma \ref{L4.1} and Lemma \ref{L4.2}. First, recall from Theorem \ref{T3.2} that the asymptotic limit $(\Theta^\infty,T^\infty)$ of $\{ (\theta_{\alpha},T_\alpha) \}_{\alpha=1}^N$ exists and satisfy \eqref{equili}. On the other hand, by using Lemma \ref{L4.2}, one can deduce that the solution of $\eqref{equili}_2$ for given phase sum $\sum_{\alpha = 1}^N\phi_\alpha^\infty$ is unique, which is also an asymptotic equilibrium of Kuramoto model \eqref{D-2}. Therefore, the asymptotic state of \eqref{D-1} can be obtained by a phase shift of the unique asymptotic state of \eqref{D-2}, and the amount of phase shift can be evaluated by Lemma \ref{L4.1}.
\end{proof}
\section{Conclusion} \label{sec:5}
\setcounter{equation}{0}
In this paper, we proposed a new Kuramoto type phase-temperature coupled model governing the dynamic evolution of Kuramoto oscillators under the effect of temperature field. In literature, the Kuramoto model has been extensively studied in the absence of temperature field.  As aforementioned in Introduction, the effect of temperature in oscillator's phase evolution has been discussed in biology literature.  Our proposed thermodynamic Kuramoto model was derived from the thermodynamic Cucker-Smale model satisfying the entropy principle. Hence, naturally it inherits entropy principle from the TCS model. For emergent dynamics, we provide several sufficient frameworks in terms of system parameters and initial phase-temperature configurations. As far as authors know, our proposed model serves the first mathematical modeling for Kuramoto oscillators undergoing the effect of their own temperature, and more specific analysis on asymptotic behaviors of the TK model will be addressed in subsequent studies.

\bibliographystyle{amsplain}

\end{document}